\numberwithin{equation}{section}
\newtheorem{theorem}[equation]{Theorem}
\newtheorem{lemma}[equation]{Lemma}
\newtheorem{cor}[equation]{Corollary}
\newtheorem{corollary}[equation]{Corollary}
\newtheorem{prop}[equation]{Proposition}
\theoremstyle{definition}
\newtheorem{definition}[equation]{Definition}
\newtheorem{remark}[equation]{Remark}
\newtheorem{example}[equation]{Example}
\DeclareMathOperator{\codim}{codim}
\DeclareMathOperator{\supp}{supp}
\DeclareMathOperator{\coker}{coker}
\DeclareMathOperator{\link}{link}
\DeclareMathOperator{\Cl}{Cl}
\DeclareMathOperator{\Spec}{Spec}
\begin{document}

\title{Conditions for Virtually Cohen--Macaulay Simplicial Complexes}

\author[A. Van Tuyl]{Adam Van Tuyl}
\address{Department of Mathematics and Statistics\\
McMaster University, Hamilton, ON, L8S 4L8, Canada}
\email{vantuyl@math.mcmaster.ca}

\author[J. Yang]{Jay Yang}
\address{Department of Mathematics and Statistics\\
Washington University,  St. Louis, MO, 63130-4899, USA}
\email{jayy@wustl.edu}

\keywords{virtual resolutions, simplicial complex, shellable, Cohen--Macaulay} 
\subjclass[2000]{Primary: 13D02; Secondary: 05E40, 05E45}
\date{\today}

\begin{abstract}
  A simplicial complex $\Delta$ is a virtually Cohen--Macaulay
  simplicial complex if its associated Stanley-Reisner ring $S$
  has a virtual resolution, as defined by Berkesch, Erman, and Smith, of
  length ${\rm codim}(S)$. We provide a sufficient condition on
  $\Delta$ to be a virtually Cohen--Macaulay simplicial complex. We
  also introduce virtually shellable simplicial complexes, a
  generalization of shellable simplicial complexes. Virtually shellable
  complexes have the property that they are virtually
  Cohen--Macaulay, generalizing the well-known fact that shellable
  simplicial complexes are Cohen--Macaulay.
\end{abstract}

\maketitle


\section{Introduction}

Cohen--Macaulay simplicial complexes are important objects in combinatorial
algebraic topology and combinatorial commutative algebra, as epitomized
in Stanley's work on the Upper Bound Conjecture \cite{S75}.
Recall that we say that $\Delta$ is Cohen--Macaulay if the minimal free
resolution of the
Stanley--Reisner ring $R/I_\Delta$ has
length  ${\rm codim}(R/I_\Delta) = \dim R - \dim R/I_{\Delta}$,
the shortest possible resolution.
Shellable simplicial complexes, another important class of simplicial complexes,
provide an ``easy'' test to determine if $\Delta$ is Cohen--Macaulay.
Further details on these complexes and their connections to
commutative algebra can be found in \cites{HH,S,V}.  The goal
of this note is to expand these classes of simplicial complexes
using the language of virtual resolutions

{\it Virtual resolutions} were introduced by Berkesch, Erman, and
Smith \cite{Virtual}
to study the resolutions of multi-graded
modules.   Very roughly speaking, a virtual resolution of a multi-graded
module $M$ is obtained by
allowing a limited amount of homology in a resolution of  $M$
(see Section 2 for the detailed definition). A feature
of a virtual resolution is that one obtains shorter algebraic
resolutions that still capture geometric information encoded into $M$.
Our understanding of virtual resolutions is still in its infancy,
although there has been some work on the topic;  for more, 
see \cite{Booms1,Booms,BE,BCHS,Loper1, HNVT, Loper2,Yang}.
Relevant to this particular paper, Berkesch, Klein,
Loper and Yang \cite{BKLY} and
Kenshur, Lin, McNally, Xu and Yu \cite{KLMXY}  studied
{\it virtually Cohen--Macaulay} modules, that is,
a module $M$ whose virtual resolution is as short as possible, namely,
the codimension of $M$.
Note that a module which is Cohen--Macaulay in
the traditional sense is also virtually Cohen--Macaulay. However,
it can be quite difficult to determine
if a given multi-graded module is virtually Cohen--Macaulay.

Presently, there are two cases where we have a complete
characterization of virtually Cohen--Macaulay modules.  The
papers \cite{KLMXY,BKLY} give constructions for virtually
Cohen--Macaulay modules corresponding to torus invariant
subvarieties of a product of projective spaces of dimensions zero and one.
More recently, the papers \cite{FH,HHL,BE} each prove that toric
inclusions on smooth toric varieties give virtually Cohen--Macaulay modules.


An important question to answer is when are (square-free) monomial ideals
virtually Cohen--Macaulay. This paper does not give a complete
characterization; however it is closer to a systematic characterization
than the results from \cite{KLMXY,BKLY}, since the resulting conditions
do not rely on a constraint in dimension or similar properties.
Recall that by the Stanley--Reisner correspondence
the square-free monomial ideals in $n$-variables are in
a bijective correspondence with simplicial complexes on $n$-vertices.

Moreover, the simplicial complexes whose Stanley-Reisner
  ideals are Cohen--Macaulay can be characterized from
  homological/combinatorial criteria using Reisner's Criterion
  \cite[Theorem 6.3.12]{V}.
  However, as being virtually Cohen--Macaulay is not a intrinsic property
  of the ideal, but rather a property relative to an ambient toric variety,
  some more care is required to state the desired property. In
  Section~\ref{sec:background}, we go in more detail. In brief, we will
  discuss a simplicial complex $\Delta$ on the variables of the Cox ring $S$
  of a toric variety $X$ as being \emph{virtually Cohen--Macaulay} if the
  corresponding $S$-module $S/I_{\Delta}$ is virtually Cohen--Macaulay over
  the toric variety $X$. When it is clear what toric variety --
  and thus which Cox ring -- is to be used, we will simply say
  that $\Delta$ is virtually Cohen--Macaulay.


The main result of this paper is to provide a sufficient condition for a
simplicial complex $\Delta$ to be virtually Cohen--Macaulay.
Our result holds over the Cox ring $S = \mathbb{K}[\Sigma]$
of a smooth toric variety $X = X(\Sigma)$ with $\Delta$ a simplicial
complex on the variables of $S$.
The case of a multi-projective space $\mathbb{P}^{n_1}\times \cdots \times
\mathbb{P}^{n_k}$ then becomes a special case in this context.
In fact, all examples given in this paper will be over a multi-projective space.
In the statement below, we let $\mathcal{B}_X$ denote the simplicial complex
whose Stanley--Reisner ideal is $B_X$, the irrelevant ideal of $S$.
See Section 2 for
complete details on this notation.

\begin{theorem}\label{maintheorem1}
  Fix a smooth toric variety $X=X(\Sigma)$, and let $S=\mathbb{K}[\Sigma]$
  be the Cox ring of $X$ and let $\Delta$ be a simplicial complex on
  the variables of $S$.  Suppose that there exists
  a simplicial complex $\Delta'$ (on possibly more variables) and a
  surjective map
  of simplicial complexes $\psi:\Delta'\rightarrow \Delta$ such that
  \begin{enumerate}
  \item $\Delta'$ is Cohen--Macaulay,
  \item for all $F \in \Delta'$, $\dim \psi(F) = \dim F$,
    and
  \item $\left\{ F\in\Delta\, |\, \left|\psi^{-1}(F)\right|>1\right\}\subseteq \mathcal{B}_X$.
  \end{enumerate}
  Then there exists an $S$-module $M$ such that $M$ is Cohen--Macaulay and
  $M^{\sim}\cong \left(S/I_{\Delta}\right)^{\sim}$, and in particular,
  $\Delta$ is virtually Cohen--Macaulay.
\end{theorem}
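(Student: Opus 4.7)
The plan is to construct the desired $S$-module $M$ from the Cohen--Macaulay ring $S'/I_{\Delta'}$, where $S' = \mathbb{K}[x_v : v \in V(\Delta')]$: heuristically, $\Delta'$ is a ``resolution'' of $\Delta$ via $\psi$, Cohen--Macaulay by (1), so its Stanley--Reisner ring should yield a Cohen--Macaulay witness for $\Delta$ once we compensate for the extra vertices. Condition (1) supplies the Cohen--Macaulay source, condition (2) ensures that natural maps between $S/I_\Delta$ and $S'/I_{\Delta'}$ are well-defined, and condition (3) controls the comparison after sheafifying on $X$.

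Concretely, I would pick a vertex section $s\colon V(\Delta)\to V(\Delta')$ of $\psi$ and define a $\mathbb{K}$-algebra map $\phi\colon S\to S'$ by $\phi(x_w)=x_{s(w)}$. Condition (2) yields $\phi(I_\Delta)\subseteq I_{\Delta'}$, since any non-face $N$ of $\Delta$ cannot lift to a face $s(N)$ of $\Delta'$: otherwise $\psi(s(N))=N$ would itself be a face of $\Delta$. So $\phi$ descends to an $S$-linear map $\bar\phi\colon S/I_\Delta\to S'/I_{\Delta'}$. The key sheaf-level comparison then reduces to showing that $\ker\bar\phi$ and $\coker\bar\phi$ are annihilated by some power of $B_X$. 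The relevant monomial classes correspond to faces of $\Delta$ or $\Delta'$ where $\psi$ is not a bijection on preimages; condition (3) forces all such faces into $\mathcal{B}_X$, and standard Stanley--Reisner bookkeeping should then provide explicit monomials in $B_X$ that annihilate them modulo $I_{\Delta'}$.

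The main obstacle lies in arranging $M$ to be Cohen--Macaulay over $S$ of codimension $\codim_S(S/I_\Delta)$. The direct choice $M=S'/I_{\Delta'}$ fails: it is not finitely generated over $S$, and its codimension over $S'$ exceeds $\codim_S(S/I_\Delta)$ by $|V(\Delta')|-|V(\Delta)|$. My plan is to take $M$ to be the quotient of $S'/I_{\Delta'}$ by the linear forms $x_v - x_{v'}$ ranging over pairs with $\psi(v)=\psi(v')$, which collapses each $\psi$-fiber to a single class and naturally realizes $M$ as an $S$-module. The technical heart is verifying that these linear forms form a regular sequence on $S'/I_{\Delta'}$: I would attempt this by leveraging condition (2) (which forces paired vertices $v\neq v'$ with $\psi(v)=\psi(v')$ to never co-occur in a face of $\Delta'$, hence $x_v x_{v'}\in I_{\Delta'}$) together with the explicit description of the associated primes $P_F = (x_u : u \notin F)$ of $I_{\Delta'}$ as $F$ runs over the facets of $\Delta'$. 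Once regularity is established, an Auslander--Buchsbaum argument gives $\pdim_S M = \codim_S M = \codim_S(S/I_\Delta)$, and the imposed identifications become isomorphisms after inverting $B_X$ by condition (3), preserving the sheafification comparison.
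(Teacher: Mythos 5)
Your overall strategy of building $M$ from $\mathbb{K}[\Delta'] = S'/I_{\Delta'}$ is the right one, and your diagnosis that $S'/I_{\Delta'}$ is not finitely generated over $S$ via a section map $\phi(x_w) = x_{s(w)}$ is correct. However, the proposed fix --- quotienting $S'/I_{\Delta'}$ by the linear forms $x_v - x_{v'}$ with $\psi(v)=\psi(v')$ --- has two fatal problems.

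First, these linear forms are essentially never a regular sequence, so the planned Auslander--Buchsbaum argument collapses. The associated primes of $S'/I_{\Delta'}$ are $P_G = (x_u : u \notin G)$ for facets $G$ of $\Delta'$. If there is \emph{any} facet $G$ containing neither $v$ nor $v'$, then $x_v, x_{v'} \in P_G$, hence $x_v - x_{v'} \in P_G$ is a zero-divisor. In Example~\ref{runningexample}, take $v=y_{2,1}$, $v'=y_{2,2}$, $G_2 = \{x_0,x_1,y_0\}$: one has $x_0x_1y_0 \neq 0$ in $\mathbb{K}[\Delta']$, yet $(y_{2,1}-y_{2,2})\cdot x_0x_1y_0 = 0$ since both $G_2 \cup \{y_{2,1}\}$ and $G_2\cup\{y_{2,2}\}$ are non-faces.

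Second, even setting aside regularity, the quotient module is simply the wrong one. Condition (2) guarantees $x_vx_{v'} \in I_{\Delta'}$, so after identifying $x_v = x_{v'} = z$ in your quotient you get $z^2 = 0$. Thus $\operatorname{Supp}(M)\subseteq V(z)$, so your $M$ loses every component of $V(I_\Delta)$ corresponding to a facet of $\Delta$ containing the vertex $\psi(v)$ --- and such facets are relevant, not irrelevant, since they map bijectively onto facets of $\Delta'$. Carrying out the computation in Example~\ref{runningexample}, your $M$ satisfies $\bar I : B_X^\infty = (x_0y_1,\,y_2)$, which has only two of the four components of $I_\Delta : B_X^\infty$. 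Condition (3) controls the \emph{fibers of $\psi$}, not components of $V(I_\Delta)$, so it cannot rescue this comparison.

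The correct mechanism, and the one the paper uses, is to replace a section by the \emph{sum over the fiber}: define $\varphi\colon S \to S'$ by $\varphi(x_w) = \sum_{v \in \psi^{-1}(w)} x_v$. For $v \in \psi^{-1}(w)$ one then has $\varphi(x_w)\cdot x_v = x_v^2$ in $\mathbb{K}[\Delta']$ (the cross-terms vanish by condition (2)), which is precisely what makes $\mathbb{K}[\Delta']$ module-finite over $S$ without any quotient at all. One then takes $M = \mathbb{K}[\Delta']$ itself, viewed as an $S$-module; it is Cohen--Macaulay over $S$ because it is a Cohen--Macaulay ring and the map is finite, and condition (3) is used to show that $\operatorname{coker}(S/I_\Delta \to M)$ is annihilated by $B_X$, which gives the sheaf isomorphism. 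This replaces your regular-sequence step entirely; no quotient and no linear forms appear.
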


\noindent
Combinatorially, Theorem~\ref{maintheorem1} implies that to show that a
particular simplicial complex is
virtually Cohen--Macaulay, we simply need to find a Cohen--Macaulay simplicial complex
$\Delta'$ which relative to $\Delta$ has some simplices duplicated and these
duplicated simplices belong to $\mathcal{B}_X$.
A more algebraic phrasing of this result would be that we are attempting to construct a sort of
``partial normalization'' that removes only the singularities that are important to us.
This approach is in
a similar vein to \cite{BE} where the normalization of the diagonal provides the appropriate module.
The previous results on virtually Cohen--Macaulay square-free monomial ideals
that appear in \cite{KLMXY,BKLY} are restricted to ideals of a specific dimension of
$V = V(I_\Delta) \subseteq X$, in particular $\dim V =0$ or $1$.   Theorem \ref{maintheorem1}
is a new and far less restrictive condition for $\Delta$ to be virtually Cohen--Macaulay. 

Returning to the combinatorial phrasing, Theorem~\ref{maintheorem1} implies that to show that
a particular simplicial complex is virtually Cohen--Macaulay, we simply need to find a
Cohen--Macaulay simplicial complex $\Delta'$ and map of complexes $\psi:\Delta' \rightarrow \Delta$
that satisfy certain properties. While this gives a condition for 
the existence of a virtually Cohen--Macaulay module,
the problem now is for a given simplicial complex $\Delta$, to construct $\Delta'$.
We provide a partial solution in the form of virtually shellable simplicial complexes.

\begin{definition}  \label{def:vshellable}
  A simplicial complex $\Delta=\left<F_1,\ldots,F_n\right>$ is {\it virtually shellable}
  if we can order the facets $F_1 \prec \cdots \prec F_{n}$  so that there exists
  a simplicial complex $\Delta'$ and a 
  surjective map  of simplicial complexes $\psi:\Delta'\rightarrow \Delta$ that satisfies
  \begin{enumerate}
  \item $\Delta'$ is shellable with the shelling order given by $\psi^{-1}(F_1) \prec \cdots \prec \psi^{-1}(F_n)$,
  \item for all $F \in \Delta'$, $\dim \psi(F) = \dim F$, 
  \item $\left\{F \in\Delta\, |\, \left|\psi^{-1}(F)\right|>1\right\}\subseteq \mathcal{B}_X$, and
  \item $\left|\psi^{-1}(F_i)\right|=1$ for $i=1,\ldots,n$.
  \end{enumerate}
  Such an ordering $F_1 \prec \cdots \prec F_n$ a {\it virtual shelling order}.
\end{definition}

\noindent
From this definition an essentially immediate corollary of
Theorem~\ref{maintheorem1} is that virtually
shellable simplicial complexes are virtually Cohen--Macaulay,
thus providing an virtual analogue of the
fact that a shellable simplicial complex is shellable.

\begin{corollary} \label{cor:shellable=>cm}
  If $\Delta$ is virtually shellable, then $\Delta$ is virtually Cohen--Macaulay.
\end{corollary}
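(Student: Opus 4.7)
The plan is to deduce this directly from Theorem~\ref{maintheorem1} by matching the hypotheses of the definition of virtually shellable with those required by the theorem. Given a virtually shellable $\Delta$, Definition~\ref{def:vshellable} already supplies a candidate auxiliary complex $\Delta'$ and a surjective simplicial map $\psi\colon\Delta' \to \Delta$. Two of the three hypotheses of Theorem~\ref{maintheorem1} are verbatim the same as conditions (2) and (3) of the definition, so nothing needs to be checked there. The only thing to verify is that $\Delta'$ is Cohen--Macaulay, which will come from condition (1) of Definition~\ref{def:vshellable}.

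My first step is to invoke the classical theorem that every shellable simplicial complex is Cohen--Macaulay (see, e.g., the references \cite{HH,S,V} already cited in the introduction). Since condition (1) of Definition~\ref{def:vshellable} asserts that $\Delta'$ is shellable (with a specific ordering of facets pulled back from $\Delta$), this classical result immediately yields that $\Delta'$ is Cohen--Macaulay, giving condition (1) of Theorem~\ref{maintheorem1}.

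With all three hypotheses of Theorem~\ref{maintheorem1} now in hand, my second step is simply to apply the theorem: it produces an $S$-module $M$ with $M^{\sim}\cong (S/I_\Delta)^{\sim}$ such that $M$ is Cohen--Macaulay, and hence $\Delta$ is virtually Cohen--Macaulay. Note that condition (4) of Definition~\ref{def:vshellable}, which stipulates $|\psi^{-1}(F_i)|=1$ on facets, is not needed for this corollary; it is a structural requirement of the notion of a virtual shelling order but is invisible to Theorem~\ref{maintheorem1}.

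There is no real obstacle here: the content of the corollary is entirely in the classical implication ``shellable $\Rightarrow$ Cohen--Macaulay'' together with Theorem~\ref{maintheorem1}, which was designed precisely so that definitions like Definition~\ref{def:vshellable} plug in cleanly. The only subtlety worth a sentence in the written proof is to remark that the shelling order mentioned in condition (1) of the definition is indeed a genuine shelling order on the facets of $\Delta'$ (which is immediate once one interprets $\psi^{-1}(F_i)$, by condition (4), as a single facet of $\Delta'$ for each $i$).
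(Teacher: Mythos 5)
Your proof is correct and matches the paper's (implicit) reasoning exactly: the paper itself calls this an ``essentially immediate corollary'' of Theorem~\ref{maintheorem1}, obtained by observing that conditions (2) and (3) of Definition~\ref{def:vshellable} coincide with conditions (2) and (3) of the theorem, while condition (1) of the definition (shellability of $\Delta'$) yields condition (1) of the theorem via the classical implication that shellable complexes are Cohen--Macaulay (Theorem~\ref{thm.shellableiscm}).
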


Because of the primary contribution of this note is to provide a technique to determine
if a square-free monomial ideal is virtually Cohen--Macaulay,
it is prudent to include an illustrative example at this point
(any explained terminology is found in Section 2).

\begin{example}\label{runningexample}
  We give an example of a simplicial
  complex whose Stanley--Reisner ring is not Cohen--Macaulay, but is virtually Cohen--Macaulay.
  Consider the smooth toric variety $X = \mathbb{P}^1 \times \mathbb{P}^2$. 
  The Cox ring of $X$ is  $S = \mathbb{K}[x_0,x_1,y_0,y_1,y_2]$ with
  $\deg(x_i) = (1,0)$ and $\deg(y_i) = (0,1)$.
  The irrelevant ideal of $S$ is
  $$B_X = \langle x_0,x_1 \rangle \cap
  \langle y_0,y_1,y_2 \rangle = \langle x_0y_0,x_0y_1,x_0y_2,x_1y_0,x_1y_1,x_1y_2
  \rangle,$$
  and thus the irrelevant complex is $\mathcal{B}_X = \langle \{x_0,x_1\}, \{y_0,y_1,y_2\} \rangle.$
 
  We now consider the two-dimensional simplicial complex
  \[\Delta=\langle F_1,F_2,F_3,F_4 \rangle =
  \left<\left\{x_0,y_0,y_2\right\},\left\{x_0,x_1,y_0\right\},\left\{x_1,y_0,y_1\right\},\left\{x_1,y_1,y_2\right\}\right>,\]
  on $X =\mathbb{P}^1 \times \mathbb{P}^2$.
  This simplicial complex is represented in Figure \ref{fig.example} on the left.
  \begin{figure}[h]
  \begin{tikzpicture}
    \draw[fill=gray] (0,0) -- (3,3) -- (6,0) -- cycle;
    \draw[fill=white] (2.5,1) -- (3,3) -- (3.5,1) -- cycle;
    \draw (0,0) -- (2.5,1);
    \draw (2.5,1)--(6,0);
    \draw (3.5,1)--(6,0);
    \fill[fill=white,draw=black] (0,0) circle (.05) node[left]{$y_1$};
    \fill[fill=white,draw=black] (6,0) circle (.05) node[right]{$y_0$};
    \fill[fill=white,draw=black] (3,3) circle (.05) node[above]{$y_2$};
    \fill[fill=white,draw=black] (2.5,1) circle (.05) node[left]{$x_1$};
    \fill[fill=white,draw=black] (3.5,1) circle (.05) node[right]{$x_0$};
  \end{tikzpicture}
  \begin{tikzpicture}
    \draw[fill=gray] (2.65,3) -- (0,0) -- (2.5,1) -- cycle;
    \draw[fill=gray] (0,0) -- (2.5,1) -- (3.5,1) -- (6,0) -- cycle;
    \draw[fill=gray] (3.35,3) -- (6,0) -- (3.5,1) -- cycle;
    \draw (0,0) -- (2.5,1);
    \draw (2.5,1)--(6,0);
    \draw (3.5,1)--(6,0);
    \fill[fill=white,draw=black] (0,0) circle (.05) node[left]{$y_1$};
    \fill[fill=white,draw=black] (6,0) circle (.05) node[right]{$y_0$};
     \fill[fill=white,draw=black] (2.65,3) circle (.05) node[above]{$y_{2,2}$};
    \fill[fill=white,draw=black] (3.35,3) circle (.05) node[above]{$y_{2,1}$};
    \fill[fill=white,draw=black] (2.5,1) circle (.05) node[left]{$x_1$};
    \fill[fill=white,draw=black] (3.5,1) circle (.05) node[right]{$x_0$};
  \end{tikzpicture}
  \caption{The simplicial complex $\Delta$ (on the left) and $\Delta'$ (on the right)}\label{fig.example}
  \end{figure}
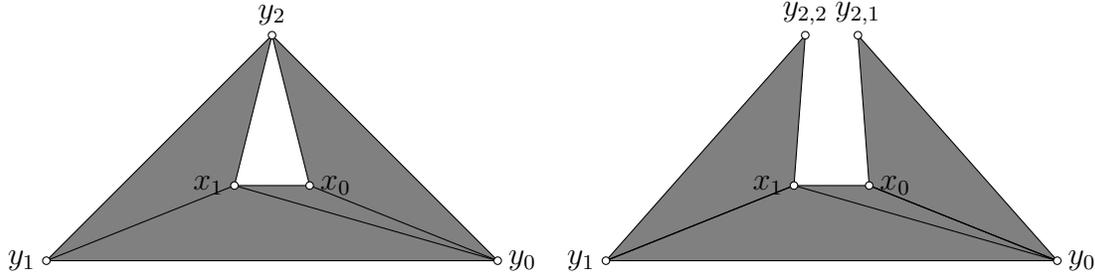
  Because
  $\Delta$ is two-dimensional, $\dim(S/I_\Delta) = 3$.  On the
  other hand, because the minimal graded free resolution of $S/I_\Delta$ is
  $$0 \rightarrow S^1 \rightarrow S^4 \rightarrow S^4 \rightarrow S
  \rightarrow S/I_\Delta \rightarrow 0$$
  (where we have suppressed the shifts), we see that $S/I_\Delta$ is not
  Cohen--Macaulay since the resolution has length
  $3 > \codim(S/I_\Delta) = 5 - 3 = 2$.

  Now consider the simplicial complex
  $$ \Delta' = \langle G_1,G_2,G_3,G_4 \rangle =
  \left\langle \left\{x_0,y_0,y_{2,1}\right\}, \left\{x_0,x_1,y_0\right\},
  \left\{x_1,y_0,y_1\right\},\left\{x_1,y_1,y_{2,2}\right\}\right\rangle$$
  on $\{x_0,x_1,y_0,y_1,y_{2,1},y_{2,2}\}$.  It is drawn
  in Figure \ref{fig.example} on the right.
  Note that this simplicial complex resembles $\Delta$, but we have
  ``separated'' the vertex $y_2$ into two points.   It is straightforward
  to verify that $\Delta'$ is a shellable simplicial complex
  with shelling order $G_1 \prec G_2 \prec G_3 \prec G_4$, and thus $\Delta'$ is
  Cohen--Macaulay.

  We construct a simplicial map $\psi:\Delta' \rightarrow \Delta$ by
  $\psi(x_i) = x_i$ and $\psi(y_i) = y_i$ for $i=0,1$, and
  $\psi(y_{2,1}) = \psi(y_{2,2}) = y_2$. Because $\Delta'$ is Cohen--Macaulay,
  conditions (1) and (2) of Theorem \ref{maintheorem1} now hold.  For condition
  (3), note that only $\psi^{-1}(y_2)$ has the property that
  $|\psi^{-1}(y_2)| > 1$, and $\{y_2\} \in \mathcal{B}_X$.  So,
  by Theorem \ref{maintheorem1}, $\Delta$ is virtually Cohen--Macaulay.

  Note $\Delta$ is  virtually shellable
  via the order $F_1 \prec F_2
  \prec F_3 \prec F_4$ since $\psi^{-1}(F_i) =G_i$.
\end{example}

\begin{remark}
  Prior to our new approach, one approach to verify that $\Delta$
  is virtually Cohen--Macaulay is to find
  a monomial ideal that is the same as $I_\Delta$ up to saturation.
  Using the same set-up as in the previous example,
  consider the ideal $J = \langle x_0y_1,x_0x_1y_2,x_1y_0y_2 \rangle$
  in $S$.  Using Macaulay2 \cite{M2}, one can verify that the ideal $J$ satisfies
  $J:B_X^\infty = I_{\Delta}:B_X^{\infty}.$   
  A minimal graded free resolution of
  $S/J$ is then a virtual resolution of $S/I_\Delta$ (this fact is 
  found within the proof of \cite[Corollary 2.6]{Virtual}
  and explicitly stated in \cite[Theorem 2.3]{HNVT}).
  Suppressing the graded shifts, the minimal graded free resolution of $S/J$ is
  \[0 \rightarrow S^3 \rightarrow S^2 \rightarrow S \rightarrow S/J \rightarrow
  0.\]
  Because this  resolution has length
  $2 = \codim(S/I_\Delta)$, $S/I_\Delta$ is virtually Cohen--Macaulay.  Theorem
  \ref{maintheorem1} changes the problem from finding a square-free monomial
  ideal to finding a Cohen--Macaulay simplicial complex.
\end{remark}

Our note is structured as follows.  In Section 2 we recall
the relevant background on simplicial complexes and virtual resolutions.
Postponing the proof of Theorem \ref{maintheorem1} until Section 4, in
Section 3 we describe more fully the properties of virtually shellable
simplicial complexes.
In Section 4 we provide all the details of Theorem \ref{maintheorem1}.
The final section includes some additional comments and
illustrative examples.

\noindent
{\bf Acknowledgements.} 
The authors used Macaulay2 \cite{M2},
and in particular, the package {\tt VirtualResolutions} \cite{Ayah}
for computer experiments.   
Van Tuyl's research is supported by NSERC Discovery Grant 2019-05412. 
This material is based, in part, upon work supported by the National Science
Foundation under Grant No. 1440140, while Yang was in
residence at the Mathematical Sciences Research Institute in
Berkeley, California, during the Spring semester of 2023.
The authors also thank Daniel Erman,
Hang Huang, and Christine Berkesch for useful
conversations that helped refine the results of this paper.




\section{Background}

\label{sec:background}
We recall the needed background on Stanley--Reisner theory and
virtual resolutions.

\subsection{Stanley--Reisner Theory}
Recall that for a finite set $V$, a {\it simplicial complex}  $\Delta$
on $V$ is a subset of $2^{V}$ closed under taking subsets.
A {\it simplex} on $V$ is a subset of $V$.  An simplex that belongs to
$\Delta$ is called a {\it face}; the maximal faces under inclusion are the
{\it facets}.  If $F_1,\ldots,F_s$ is a complete list of the facets of
$\Delta$, we write $\Delta = \langle F_1,\ldots,F_s \rangle$.
The {\it dimension} of $F \in \Delta$ is given by $\dim F = |F|-1$, where we
adopt the convention that $\dim \emptyset = -1$.  The dimension
of a simplicial complex is $\dim \Delta = \max \{\dim F ~|~ F \in \Delta \}$.
We say $\Delta$ is {\it pure} if all of its facets have the same dimension.
Given simplicial complexes $\Delta,\Delta'$ on vertex sets $V,V'$ respectively,
a {\it simplicial map} is a map on vertex sets
$\varphi:V'\rightarrow V$ such that for $F\in\Delta'$, $\varphi(F)\in \Delta$.
As an abuse of notation, we write $\varphi:\Delta'\rightarrow \Delta$.

Fix a ring $S=\mathbb{K}[x_1,\ldots,x_n]$. Then for a simplicial complex $\Delta$
on the variables $\left\{x_1,\ldots,x_n\right\}$, we can associate a square-free
monomial ideal $I_{\Delta}$ generated by the monomials generated by the non-faces.
Importantly, this is a bijective correspondence between simplicial complexes
and square-free monomial ideals.
For each monomial, we define the {\it support} of that monomial as the
simplex given by $\supp(m) = \left\{x_i ~|~  x_i ~\mbox{divides}~  m\right\}$.
In this notation, the {\it Stanley--Reisner ideal} for a simplicial complex
can be defined as follows:
\[I_{\Delta}:=\left<m\in S\, |\, m \text{ a square-free monomial, } \supp(m) \notin \Delta\right>.\]
From this we can define the Stanley-Reisner ring, $k[\Delta]=S/I_{\Delta}$, and
note that the square-free
monomials in $k[\Delta]$ are in one-to-one correspondence with the simplices of $\Delta$.

Importantly for our work, we can check if a square-free monomial ideal is
Cohen--Macaulay using the simplicial complex. We will say that a
simplicial complex $\Delta$ is {\it Cohen--Macaulay} if the corresponding ideal
$I_{\Delta}$ is Cohen--Macaulay.  Reisner's Criterion, stated below, gives a
characterization of Cohen--Macaulay simplicial complexes in terms of the
reduced simplicial homology of its links.  Given a simplicial
complex $\Delta$ and facet $\sigma \in \Delta$, the {\it link} of $G$ in $\Delta$
is the simplicial complex is
$$\link_\Delta(G) = \{F \in \Delta ~|~ F \cap G = \emptyset ~~\mbox{and}~~
F \cup G \in \Delta\}.$$
For one proof of the following result see \cite[Theorem 8.1.6]{HH}.

\begin{theorem}[Reisner's Criterion]\label{thm.reisner}
  A simplicial complex $\Delta$ is Cohen--Macaulay over $\mathbb{K}$
  if and only if for all $G \in \Delta$, $\widetilde{H}_i(\link_\Delta(G);\mathbb{K}) = 0$
  for all $i<\dim \link_\Delta(G)$
\end{theorem}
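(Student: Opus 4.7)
The plan is to deduce Reisner's Criterion from the standard local cohomology characterization of Cohen--Macaulayness combined with Hochster's formula for the local cohomology of a Stanley--Reisner ring.

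First, I would recall that $\mathbb{K}[\Delta]$ is Cohen--Macaulay if and only if $H^i_{\mathfrak{m}}(\mathbb{K}[\Delta])=0$ for all $i<d$, where $d=\dim \mathbb{K}[\Delta]$ and $\mathfrak{m}=(x_1,\ldots,x_n)$. This is the familiar reformulation via depth: Cohen--Macaulayness amounts to $\mathrm{depth}_{\mathfrak{m}}\mathbb{K}[\Delta]=\dim \mathbb{K}[\Delta]$, and depth is detected by the smallest index in which local cohomology is nonzero.

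The central input is Hochster's formula, which computes the $\mathbb{Z}^n$-graded pieces of $H^i_{\mathfrak{m}}(\mathbb{K}[\Delta])$ in terms of reduced homology of links. For each face $F\in\Delta$ and each multidegree $a\in\mathbb{Z}^n$ with $\{j:a_j<0\}=F$ and $a_j\le 0$ for all $j$,
\[
\dim_{\mathbb{K}} H^i_{\mathfrak{m}}(\mathbb{K}[\Delta])_a=\dim_{\mathbb{K}} \widetilde{H}_{i-|F|-1}(\link_\Delta(F);\mathbb{K}),
\]
and the contribution from any other multidegree is zero. I would establish this by computing $H^i_{\mathfrak{m}}$ via the \v{C}ech complex on $x_1,\ldots,x_n$, restricting to the multigraded strand of degree $a$, and identifying the resulting complex (after clearing denominators) with the augmented simplicial chain complex of $\link_\Delta(F)$; the monomials surviving localization correspond exactly to faces of the link.

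With Hochster's formula in hand, Cohen--Macaulayness translates into the statement that $\widetilde{H}_j(\link_\Delta(F);\mathbb{K})=0$ for every face $F\in\Delta$ and every $j<d-|F|-1$. To match the theorem's bound $j<\dim\link_\Delta(F)$, I would argue separately that Cohen--Macaulayness forces purity of $\Delta$, so that $\dim\link_\Delta(F)=d-|F|-1$ for all $F$: if $F$ were a facet with $\dim F<d-1$, applying Hochster's formula with $\link_\Delta(F)=\{\emptyset\}$ and $\widetilde{H}_{-1}(\{\emptyset\};\mathbb{K})=\mathbb{K}$ would produce nonzero local cohomology in degree $|F|<d$, a contradiction. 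The principal obstacle is proving Hochster's formula itself; once it is available, the equivalence of the algebraic and combinatorial vanishing conditions is essentially a bookkeeping exercise.
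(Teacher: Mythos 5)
The paper does not prove Reisner's Criterion; it states it as background and cites \cite[Theorem 8.1.6]{HH} for a proof. Your outline is correct and is, in substance, the same argument given in that reference: reduce Cohen--Macaulayness of $\mathbb{K}[\Delta]$ to the vanishing of $H^i_{\mathfrak{m}}(\mathbb{K}[\Delta])$ for $i<\dim\mathbb{K}[\Delta]$, invoke Hochster's formula expressing the $\mathbb{Z}^n$-graded pieces of local cohomology in terms of reduced (co)homology of links, and then translate the index bound, using the observation that a Cohen--Macaulay complex must be pure so that $\dim\link_\Delta(F)=\dim\Delta-|F|$ for every face $F$. Your bookkeeping of indices ($j=i-|F|-1<d-|F|-1$) and your purity argument via $\widetilde H_{-1}(\{\emptyset\})\neq 0$ for a short facet are both correct, and you correctly identify that the real mathematical content is Hochster's formula itself, whose proof via the $\mathbb{Z}^n$-graded \v{C}ech complex you sketch accurately.
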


The following definition will also be of importance to us.

\begin{definition}\label{dfn.shellable}
  Let $\Delta$ be a pure simplicial complex, and let
  $F_1 \prec \cdots \prec F_n$ be an
  ordering on the maximal simplices. Then this ordering is a
  \emph{shelling of $\Delta$} if the complex
  $\langle F_{i+1} \rangle \cap \langle F_1,\ldots,F_i \rangle$
  is pure and has dimension $(\dim F_{i+1} - 1)$ for $i=1,\ldots,n-1$.
  In this case, we say $\Delta$ is a {\it shellable} simplicial complex.
\end{definition}

Shellability is a sufficient condition for Cohen--Macaulayness;
see \cite[Theorem 8.2.6]{HH}.

\begin{theorem}\label{thm.shellableiscm}
  If $\Delta$ is a shellable simplicial complex, then $\Delta$ is Cohen--Macaulay.
\end{theorem}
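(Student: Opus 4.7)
The plan is to apply Reisner's Criterion (Theorem~\ref{thm.reisner}) directly. Concretely, for every face $G \in \Delta$ I must show that $\widetilde{H}_i(\link_\Delta(G); \mathbb{K}) = 0$ for all $i < \dim \link_\Delta(G)$. My overall strategy rests on two intermediate claims: (a) shellability is inherited by links, and (b) every shellable complex has trivial reduced homology below its top dimension (in fact, it has the homotopy type of a wedge of top-dimensional spheres). Combining (a) and (b) with Reisner's Criterion yields the theorem immediately.

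For step (a), let $F_1 \prec \cdots \prec F_n$ be a shelling of $\Delta$ and fix $G \in \Delta$. Let $F_{i_1} \prec \cdots \prec F_{i_k}$ be the sub-list of the facets that contain $G$. I claim the induced order $F_{i_1} \setminus G \prec \cdots \prec F_{i_k} \setminus G$ is a shelling of $\link_\Delta(G)$. Purity is automatic since $\Delta$ is pure and $G$ sits inside each $F_{i_j}$. The key point is to show that the shelling intersection
\[
\langle F_{i_j} \setminus G \rangle \cap \langle F_{i_1} \setminus G,\ldots, F_{i_{j-1}} \setminus G\rangle
\]
is obtained from the original shelling intersection $\langle F_{i_j}\rangle \cap \langle F_1,\ldots,F_{i_j-1}\rangle$ by keeping only those codimension-one faces that contain $G$ and then deleting $G$ from each. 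Purity of codimension one in the link then follows from purity of codimension one in $\Delta$, after checking that no relevant codimension-one faces are lost because any maximal proper face of $F_{i_j}$ containing $G$ lies in some earlier facet if and only if it lies in some earlier facet containing $G$.

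For step (b), I argue by induction on the number of facets $n$. The base case $n=1$ is a simplex, which is contractible. For the inductive step, write $\Delta = \Delta_{n-1} \cup \langle F_n\rangle$ with $\Delta_{n-1} = \langle F_1,\ldots,F_{n-1}\rangle$. The shelling condition says that $\langle F_n\rangle \cap \Delta_{n-1}$ is a pure codimension-one subcomplex of $\partial F_n$. A standard Mayer--Vietoris argument (or homotopy gluing) splits into two cases: either this intersection is a proper subcomplex of $\partial F_n$, in which case it is itself a shellable ball and hence contractible so that attaching $F_n$ does not change the homotopy type; or the intersection equals $\partial F_n$, in which case the attachment wedges on a sphere of dimension $\dim \Delta$. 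Either way, reduced homology below top dimension is preserved.

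The main obstacle is step (a): the combinatorial bookkeeping to confirm that passing from $\Delta$ to $\link_\Delta(G)$ respects the codimension-one purity of the shelling intersections is the only part that is not purely formal, because one has to rule out the pathology of a codimension-one face of $F_{i_j} \setminus G$ sitting inside an earlier facet of $\Delta$ that does \emph{not} contain $G$. Once this is settled, step (b) reduces to a clean induction and the conclusion via Reisner's Criterion (Theorem~\ref{thm.reisner}) is immediate.
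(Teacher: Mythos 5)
The paper does not supply its own proof of this theorem; it simply cites Herzog--Hibi \cite[Theorem 8.2.6]{HH}. Your argument is a correct, standard alternative and is essentially the topological route: Reisner's Criterion (Theorem~\ref{thm.reisner}) reduces the claim to vanishing of $\widetilde{H}_i(\link_\Delta(G);\mathbb{K})$ below top degree for every face $G$, and you get this by combining (a) links of shellable complexes are shellable and (b) shellable complexes have reduced homology concentrated in top degree. The Herzog--Hibi proof is instead algebraic: it inducts on the shelling, builds a short exact sequence from the decomposition $\Delta_i = \Delta_{i-1}\cup\langle F_i\rangle$, and uses the depth lemma to control the depth of $S/I_{\Delta_i}$ directly, never passing through Reisner's Criterion or any topology. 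Both routes are valid; yours slots naturally into the way the paper has quoted Reisner's Criterion and yields the stronger homotopy-type conclusion, while the algebraic proof stays entirely inside commutative algebra and extends more readily to the sequentially Cohen--Macaulay statement for nonpure shellings. Two small refinements to your write-up. In step (a), the ``pathology'' you flag is easier to dismiss than you suggest: a face of $\link_\Delta(G)$ corresponds to a face $H=H'\cup G$ of $\Delta$ with $G\subseteq H$, so any facet of $\Delta$ containing $H$ automatically contains $G$; applying the purity of $\langle F_{i_j}\rangle\cap\langle F_1,\ldots,F_{i_j-1}\rangle$ to $H$ therefore produces a codimension-one face that again contains $G$ and hence descends to the link. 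In step (b), the asserted contractibility of a proper pure codimension-one subcomplex of $\partial F_n$ follows immediately because it is a cone on any vertex of $F_n$ whose opposite facet is omitted.
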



\subsection{Virtual Resolutions}
We introduce some notation prior to discussing virtual resolutions. To a fan
$\Sigma\subset \mathbb{R}^n$ we can associate a normal toric variety
$X:=X(\Sigma)$.  Moreover, there is a polynomial ring, called the {\it Cox ring}
or homogeneous coordinate ring, denoted $S:=\mathbb{K}[\Sigma]$. This polynomial
ring is given a grading by $\Cl(X)$. Additionally, there is an irrelevant ideal
$B_X:=B(\Sigma)$. 

For the majority of this paper, it suffices to consider the case of a
product of projective spaces $X=\mathbb{P}^{n_1}\times\cdots\times \mathbb{P}^{n_r}$.
In this case $S$ is a polynomial ring on $\sum_{i=1}^r (n_i+1)$ variables, i.e.,
$S = \mathbb{K}[x_{1,0},\ldots,x_{1,n_1},\ldots,x_{r,0},\ldots,x_{r,n_r}]$ which
has the grading given by
$\deg x_{i,j} = e_i$, the $i$-th standard basis vector of $\mathbb{N}^r$.
The irrelevant ideal is given by
$$B_X = \bigcap_{1 \leq i \leq r} \langle x_{i,0},\ldots,x_{i,n_i} \rangle.$$

Importantly for our discussion, $X$ is constructed from the information of $S$, $B$,
and the grading by taking a quotient of $\Spec(S)\setminus V(B)$. Moreover,
there is a correspondence between finitely generated homogeneous $S$-modules
and coherent $\mathcal{O}_{X}$-modules. For a given module $M$, we will denote the
unique corresponding $\mathcal{O}_X$-module by $\tilde{M}$ or $M^{\sim}$. For a
further reference on toric varieties, see \cite{CLS}.

We now arrive at the definition of a virtual resolution.

\begin{definition}[{\cite[Definition 1.1]{Virtual}}]\label{dfn.virtualresolution}
  Let $X:=X(\Sigma)$ be a smooth toric variety, and let
  $S$ be the Cox ring of $X$. Then for a finitely generated homogeneous
  $S$-module $M$, a graded free $R$-complex
  $F_\bullet: \cdots \rightarrow F_2 \rightarrow F_1 \rightarrow F_0$
  is a \emph{virtual resolution} of $M$ if the corresponding complex
  of vector bundles
  $\tilde{F}_\bullet: \cdots \rightarrow \tilde{F}_2 \rightarrow
  \tilde{F}_1 \rightarrow \tilde{F}_0$
  is a locally free resolution of $\tilde{M}$.
\end{definition}

Our aim from here is to ask what the shortest possible virtual resolution
for any particular module is. There is one immediate lower bound,
given by $\codim M$ (see \cite[Proposition 2.5]{Virtual}). Naturally, a first
question to study is which modules have virtual resolutions
of length equal to $\codim M$. In the case of free resolutions,
the modules with free resolutions of length equal to $\codim M$ are the
Cohen--Macaulay modules, and so in analogy we define virtually
Cohen--Macaulay modules.

\begin{definition}[\cite{BKLY}]\label{dfn.vcm}
  Let $X:=X(\Sigma)$ and $M$ a finitely generated ${\rm Cl}(X)$-graded
  $S$-module.  We say $M$ is {\it virtually Cohen--Macaulay}
  if there exists a virtual resolution of length equal to $\codim M$.
\end{definition}

It is immediately clear that if $M$ is Cohen--Macaulay,
then $M$ is virtually Cohen--Macaulay.  On the other hand,
it follows from the definition of a virtual resolution that
if there is a homogeneous
prime ideal $p\subset S$ with $p:B_X^{\infty}\neq S$
where $M_{p}$ is not Cohen--Macaulay, then $M$ is not virtually Cohen--Macaulay.

Returning to simplicial complexes in this context, we define some terms for convenience.
For a toric variety $X=X(\Sigma)$, $\Delta$ is a \emph{simplicial complex on $X$}
if $\Delta$ is a simplicial complex with vertex set given by the variables of the
Cox ring $S$ of $X$, or equivalently, a simplicial complex on the rays of the
fan $\Sigma$. Importantly for us, the associated the irrelevant ideal $B_X$ of $X$
is always a square-free monomial ideal. In fact, the majority of the results
in this paper can be viewed instead as about the pairs $(\Delta,\mathcal{B}_X)$ of
simplicial complexes where in the case of toric varieties we have $B_X=I_{\mathcal{B}_X}$.
We call $\mathcal{B}_X$ and any subcomplex $\mathcal{C} \subseteq \mathcal{B}_X$
an {\it irrelevant simplicial complex}.  For any
  simplicial complex $\Delta$ on $X$, we say a face $F \in \Delta$ is {\it
    relevant} if $F \not\in \mathcal{B}_X$; otherwise we say the simplex
  $F$ is {\it irrelevant}.

Then, as with the case of Cohen--Macaulay complexes, a
simplicial complex $\Delta$ on a toric variety $X$ is
\emph{virtually Cohen--Macaulay} if the module $S/I_{\Delta}$ is virtually Cohen--Macaulay.


\section{On virtual shellings}

In this section we focus on virtual shellings of simplicial complexes and
some of their properties. Note that
we postpone the proof of Theorem \ref{maintheorem1} until the next section.
While Theorem~\ref{maintheorem1} gives a criterion to check that a
particular Stanley-Reisner
ideal is virtually Cohen--Macaulay, it does not directly
give any method or technique to construct the simplicial complex $\Delta'$
with the desired properties. For this we propose Definition~\ref{def:vshellable}.
The motivation for this definition is similar to that of shellability
in the standard setting --  it gives a more readily checkable condition
that is sufficient to prove virtual Cohen--Macaulayness. In other words,
we want to a virtual analog of Theorem \ref{thm.shellableiscm}.

Some caution is still warranted though. While one can provide
a finite algorithm to check if a particular shelling order gives
a virtual shelling
-- by simply enumerating all shellable simplicial complexes with an
appropriate map to $\Delta$ --
it is not as straightforward to check
as in the classic setting. If, however,
we impose slightly stronger conditions, we can give a simple
sufficient condition for an ordering to be a virtual shelling.
In the statement below, for any subset $\Gamma \subseteq 2^V$,
we write $\overline{\Gamma}$ to
denote the smallest simplicial complex that contains $\Gamma$.

\begin{prop} \label{prop:shellingcheck}
  Let $\Delta=\left<F_1,\ldots,F_n\right>$ be a pure simplicial
  complex on a toric variety $X$, and let $\mathcal{C}\subset\mathcal{B}_X$
  be any irrelevant simplicial complex with
  $F_i\notin \mathcal{C}$ for all $1\leq i \leq n$.
  Then define the set $\Xi_i := (\left<F_1,\ldots,F_i\right> \cap \langle F_{i+1} \rangle)\setminus \mathcal{C}$ for $i=1,\ldots,n-1$.
    If
  \begin{enumerate}
  \item $\overline{\Xi}_i$ is pure of dimension $\dim \Delta - 1$, and
  \item for $F,G\in \Xi_i$ of dimension $\dim \Delta - 1$, we have $F\cap G\in \Xi_i$,
  \end{enumerate}
  then $F_1,\ldots,F_n$ gives a virtual shelling.
\end{prop}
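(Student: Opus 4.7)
The plan is to construct $\Delta'$ inductively, adding one abstract facet at a time and gluing each new facet to the previous subcomplex along (a lift of) $\overline{\Xi}_i$. I set $\Delta'_1 := \langle G_1 \rangle$, where $G_1$ is an abstract simplex isomorphic to $F_1$, with $\psi(G_1) = F_1$ the obvious vertex bijection. At step $i+1$, I take a fresh abstract simplex $G_{i+1}$ of the same dimension as $F_{i+1}$; for each face $F \in \overline{\Xi}_i$, I identify the corresponding subface of $G_{i+1}$ with a canonical preimage of $F$ already sitting in $\Delta'_i$. Subfaces of $G_{i+1}$ outside $\overline{\Xi}_i$ remain fresh, producing the intended duplications of the irrelevant $\mathcal{C}$-faces.

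The main technical point is that this identification is well-defined, and this is where hypothesis~(2) enters. I maintain as an inductive invariant that every face of $\Delta'_i$ whose $\psi$-image lies outside $\mathcal{C}$ has a unique preimage. Thus each top-dimensional $F_{\mathrm{top}} \in \Xi_i$ (which satisfies $F_{\mathrm{top}} \notin \mathcal{C}$) has a unique preimage $\widetilde{F}_{\mathrm{top}}$ in $\Delta'_i$, and for any subface $F \subseteq F_{\mathrm{top}}$ the preimage $\widetilde{F}_{\mathrm{top}}$ induces a canonical preimage of $F$. Consistency must be checked when $F$ is a subface of two top-dimensional elements $F_{\mathrm{top}}, F'_{\mathrm{top}} \in \Xi_i$: by hypothesis~(2), $F_{\mathrm{top}} \cap F'_{\mathrm{top}} \in \Xi_i$, hence it lies outside $\mathcal{C}$ and has a unique preimage $\widetilde{H}$ in $\Delta'_i$. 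The induced preimages of $F$ coming from $F_{\mathrm{top}}$ and from $F'_{\mathrm{top}}$ are both subfaces of $\widetilde{H}$, and since $\psi$ is a bijection on $\widetilde{H}$, they coincide.

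With $\Delta'$ in hand, the four conditions of Definition~\ref{def:vshellable} are bookkeeping. For~(1), by construction $\langle G_{i+1} \rangle \cap \langle G_1, \ldots, G_i \rangle = \overline{\Xi}_i$, which is pure of dimension $\dim \Delta - 1 = \dim G_{i+1} - 1$ by hypothesis~(1), giving the shelling condition. Condition~(2) is immediate since $\psi$ restricts to a bijection on each facet $G_j$. For~(3), a face $F \in \Delta$ is duplicated only when it lies in two facets $F_i, F_j$ ($i<j$) yet fails to appear in $\overline{\Xi}_{j-1}$; if $F \notin \mathcal{C}$, then $F \in \Xi_{j-1} \subseteq \overline{\Xi}_{j-1}$ and so $F$ would have been identified, forcing $F \in \mathcal{C} \subseteq \mathcal{B}_X$. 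Condition~(4) follows because the $F_j$ are distinct facets of $\Delta$ with $F_j \notin \mathcal{C}$, so each $G_j$ is by construction the unique preimage of $F_j$. The main obstacle is organizing the inductive gluing precisely enough to verify the uniqueness invariant at every stage; once that invariant is in place, hypothesis~(2) provides exactly the consistency needed to define $\psi$ as a genuine simplicial map.
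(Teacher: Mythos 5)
Your proof is correct and follows essentially the same strategy as the paper's: an inductive construction of $\Delta'$ that attaches one new abstract simplex per step and glues it to the previous complex along a lift of $\overline{\Xi}_i$, with hypothesis~(2) guaranteeing that the identification is well-defined (i.e.\ that distinct top-dimensional elements of $\Xi_i$ have compatible preimages). The paper's proof makes the same inductive argument but splits explicitly into two cases according to whether the attachment region $\Gamma=\overline{\psi^{-1}(\Xi_i)}$ has one facet (attach by adding a fresh vertex) or several (hypothesis~(2) forces $\psi|_\Gamma$ to be a vertex bijection onto $\overline{\Xi}_i$, so the new simplex uses only existing vertices); your abstract ``fresh simplex then quotient'' formulation treats both cases uniformly and is a cleaner presentation of the same underlying idea.
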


To ease the writing of the proof of Proposition~\ref{prop:shellingcheck}, we will define this stronger notion of a virtual shelling which is satisfied by the virtual shellings coming from the proposition.

\begin{definition}
  We say $\Delta'$ gives a virtual shelling of the pair $(\Delta,\mathcal{C})$ if 
  \begin{enumerate}
  \item $\Delta'$ is shellable with the induced shelling order,
  \item for all $F \in\Delta'$, $\dim \psi(F) = \dim F$, and 
  \item $\left\{F \in\Delta ~|~ |\psi^{-1}(F)|\geq 1\right\} \subset \mathcal{C}$.
  \item for all $F \in\Delta$, $F\notin \mathcal{C}$
  \end{enumerate}
  
\end{definition}

\begin{lemma}
  If $(\Delta,\mathcal{C})$ is a virtually shellable pair, and $\mathcal{C}\subset\mathcal{B}_X$, then $\Delta$ is virtually shellable.
\end{lemma}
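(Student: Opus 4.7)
The plan is to reuse the very same data---namely $\Delta'$, the simplicial map $\psi \colon \Delta' \to \Delta$, and the facet ordering $F_1 \prec \cdots \prec F_n$---that witness $(\Delta,\mathcal{C})$ being a virtually shellable pair, and then verify each of the four clauses of Definition~\ref{def:vshellable} in turn. Clauses (1) and (2) of Definition~\ref{def:vshellable} coincide verbatim with clauses (1) and (2) of the pair definition, so these transfer with no additional work.

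For clause (3) of Definition~\ref{def:vshellable}, I would argue as follows: if $F \in \Delta$ has $|\psi^{-1}(F)| > 1$, then clause (3) of the pair definition places $F \in \mathcal{C}$, and the standing hypothesis $\mathcal{C} \subseteq \mathcal{B}_X$ then promotes this to $F \in \mathcal{B}_X$. This is the single point at which the hypothesis $\mathcal{C} \subseteq \mathcal{B}_X$ enters the proof.

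For clause (4) of Definition~\ref{def:vshellable}, I would combine clauses (3) and (4) of the pair definition: each facet $F_i$ of $\Delta$ lies outside $\mathcal{C}$ by clause (4), so the contrapositive of clause (3) forces $|\psi^{-1}(F_i)| \leq 1$. Surjectivity of $\psi$ (built into the pair data) gives $|\psi^{-1}(F_i)| \geq 1$, and we conclude $|\psi^{-1}(F_i)| = 1$, as required.

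There is no genuine technical obstacle here: the argument is essentially bookkeeping against the two definitions, and the only substantive input beyond the hypotheses is the containment $\mathcal{C} \subseteq \mathcal{B}_X$, which is invoked exactly once. The lemma should be viewed as making explicit that ``virtually shellable pair'' is a strictly stronger packaging than ``virtually shellable,'' so that Proposition~\ref{prop:shellingcheck} (whose conclusion is phrased for pairs) can be freely transported to the setting of Definition~\ref{def:vshellable}.
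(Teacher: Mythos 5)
Your proof is correct and matches the paper's intent exactly: the paper's own ``proof'' is the placeholder ``Clear?'', and yours supplies precisely the routine verification of the four clauses of Definition~\ref{def:vshellable} that the authors evidently had in mind, with the hypothesis $\mathcal{C}\subseteq\mathcal{B}_X$ entering only in clause (3). One small remark worth recording: as printed, clause (3) of the pair definition reads $|\psi^{-1}(F)|\geq 1$ and clause (4) reads ``for all $F\in\Delta$, $F\notin\mathcal{C}$'', which together are contradictory once $\psi$ is surjective; your reading (clause (3) with $>1$, clause (4) restricted to facets) is the correct one and is the only way the definition and this lemma can both be non-vacuous.
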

\begin{proof}
  Clear?
\end{proof}

\begin{proof}[Proof of Proposition~\ref{prop:shellingcheck}]
  We will fix $\mathcal{C}$ and inductively construct a simplicial complex $\Delta'$
  and a simplicial map $\psi:\Delta' \rightarrow \Delta$ giving a virtual shelling of the pair $(\Delta,\mathcal{C})$ by inducting on the number of facets in $\Delta$.

  
  Now to proceed with the induction. The first step is the case of a
  single facet, that is, $\Delta = \langle F_1 \rangle$. 
  This step is clear since we can take the identity simplicial
  map $\psi: \Delta \rightarrow \Delta$. The first three conditions are immediate,
  and then since $F_i\notin \mathcal{C}$, this map gives a virtual shelling of the pair $(\Delta,\mathcal{C})$

  For the inductive step, suppose that $\Delta=\left<F_1,\ldots,F_i\right>$
  and $\psi:\Delta'\rightarrow \Delta$ gives a virtual shelling of the pair $(\Delta,\mathcal{C})$.

  Let $F$ be a simplex of dimension $\dim \Delta$ such that
  $(\Delta\cap \langle F \rangle)\setminus \mathcal{C}$ is pure of dimension
  $\dim \Delta - 1$ and
  $\Xi=\left(\Delta  \cap \langle F\rangle\right) \setminus \mathcal{C}$
  has the properties stated above. Then we must show that we can
  find $\Delta''$ and a map
  $\psi':\Delta''\rightarrow \Delta\cup \left<F\right>$
  giving a virtual shelling of the pair $(\Delta\cup \left<F\right>,\mathcal{C})$.

  We will construct $\Delta''$, by attaching a simplex to $\Delta'$.
  For which, we need to specify which simplicies to attach along.
  For this, consider the sub-simplicial complex
  $\Gamma=\overline{\psi^{-1}(\Xi)} \subseteq \Delta'$. Since all
  maximal elements of $\Xi$ are of dimension $\dim \Delta - 1$ by
  by hypothesis, and $\psi$ preserves dimension, it must also be that
  $\Gamma$ is pure of dimension $\dim \Delta -1$.
    
  Next to ensure that it is possible to attach a new simplex
  along $\Gamma$ while retaining the properties desired in our induction,
  we consider two cases.

  \noindent {\bf Case 1:} $\Gamma$ has a single facet.
  
  If $\Gamma = \langle H \rangle$ has a single facet,
  we attach a new simplex $G$ to $\Delta'$, using a new vertex $v$.
  In other words, we construct the simplicial complex $\Delta''$ by $\Delta'' = \Delta' \cup \langle G \rangle$
  where $G = H \cup \{v\}$.
  Along with $\Delta''$, we can define a map
  $\psi':\Delta''\rightarrow \Delta \cup \langle F\rangle$, given by mapping $v$
  to the unique vertex $w\in F\setminus \psi(H)$.
  
  Now we must show that $\psi'$ satisfies
  the conditions of a virtual shelling. That $\Delta''$ is shellable is clear
  from the construction since $\Delta'$ is shellable by induction and
  $\Delta' \cap \langle G \rangle  = \langle H \rangle$ has a single facet
  of dimension $\dim \Delta'' -1$.  Moreover, the only new simplices
  in $\Delta''$ are those contained in $G$, and by construction
  we have $\psi'(G)=F$, and so for each of the new simplicies, $\psi'$ preserves dimension.
  \[\left\{K \in \Delta\cup \langle F \rangle \,
  |\, |{\psi'}^{-1}(K)|>1\right\}
  \subseteq \mathcal{C}.\]
  
  For each $K$ with $w\notin K$, then $K\in \Delta$ with $\psi'^{-1}(K)=\psi^{-1}(K)$,
  and as such, $K\in \mathcal{C}$ if $\left|\psi'^{-1}(K)\right|>1$.

  Otherwise, $K$ is a face in $\Delta\cup \langle F \rangle $ with $w\in K$.
  Furthermore, suppose there are faces $L, L'\in \Delta''$ with
  $L \neq L'$ and $\psi(L)=\psi(L')=K$.
  Then without loss of generality, we may assume that
  $L'\subseteq G$, since otherwise, we have by our inductive hypothesis
  that $K\in\mathcal{C}$. However, since $\psi$ is one-to-one when
  restricted to $G$, we must have that $L \not\subseteq G$. But
  then $\psi(L')\subset F$ and $\psi(L)\subset \Delta$, and so
  $K \in \Delta\cap \left<F\right>$. Finally, since $w\in K$, we have
  $K \notin \overline{\Xi}$. Thus we have $K \in\mathcal{C}$ as desired.

  \noindent {\bf Case 2:} $\Gamma$ has more than one facet.

   Now suppose that $\Gamma$ contains more than a single facet.
   We will first show that $\Gamma$ is a subcomplex of
   $\Delta'$ which has exactly $\dim F + 1$ vertices. Note that by
   dimension counts and the fact that $\Gamma$ has at least two facets,
   $\Gamma$ has at least $\dim F+1$ vertices.
   
   Let $v,w\in \Gamma$ be vertices with $\psi(v)=\psi(w)$.
   Moreover, let $G,G'\in\Gamma$ be maximal simplices such that
   $v\in G$ and $w\in G'$.  Then by the inductive hypothesis
   $\psi(G)\cap \psi(G')\in \Xi$.
   Then since $\Xi\cap \mathcal{C}=\emptyset$, we have
   $\psi(G)\cap \psi(G')\notin \mathcal{C}$. As a consequence,
   we have that $\left|\psi^{-1}(\psi(G)\cap \psi(G'))\right|=1$, that is,
   the fiber of $\psi$ at $\psi(G) \cap \psi(G')$ contains
   only one element. But since the map $\psi$ is determined by the
   map on vertices and preserves dimension, this implies that $v=w$.
   Indeed, we have $\psi(v) \in \psi(G)$ and $\psi(w) \in \psi(G')$,
   so $\psi(v) = \psi(w) \in \psi(G) \cap \psi(G')$.  If $v \neq w$,
   then the dimension of $\psi^{-1}(\psi(G) \cap \psi(G'))$ would be
   different than $\psi(G) \cap \psi(G')$.   So, $v = w$. 
   Consequently, the vertices in $\Gamma$ are in one-to-one
   correspondence with the vertices in $\psi(\Gamma)\subset F$, and thus
   there are exactly $\dim F + 1$ vertices in $\Gamma$.

   Note in particular that every collection of $(d-1)$-dimensional
   simplices on $d+1$ vertices is contained in the $d$-skeleton of the
   simplex on those vertices. Given that $\Gamma\subset \Delta'$ is a
   collection of $d-1$-dimensional simplices on $d+1$ vertices
   where $d=\dim F$, there exists a unique simplex $G$ such that
   $\Gamma$ is in the $(d-1)$-skeleton of $G$. Moreover, note that
   $G\notin \Delta'$, since if $G\in\Delta'$, then
   $F=\psi(G)\in \Delta$, which is a contradiction. Thus there must
   exist a simplicial complex, $\Delta''$ such that
   $\Delta''=\Delta' \cup \langle G \rangle$. Moreover, there is a
   map $\psi':\Delta''\rightarrow \Delta\cup \langle F \rangle$,
   which at the level of
   vertices is exactly identical to $\psi$.
  
   Now we must show that $\psi$ satisfies the properties for the induction.
   Condition (1) is the fact that $\Gamma$ is pure of dimension
   $\dim F - 1$ and $\Gamma = \Delta' \cap \langle G \rangle$.
   For condition (2), it suffices to note that $\dim G = \dim F$, which is
   true by construction. For condition (3), since there are no new vertices,
   the only simplices we must consider are those with
   $H \subset F$ with $H \notin \psi(\Gamma)$. But this
   combination implies that either $H \notin \Delta$, in
   which ${\psi'}^{-1}(H)$ must contain only the corresponding face in
   $G$, or $H \in \mathcal{C}\subset \mathcal{B}_X$, in which case the
   size of ${\psi'}^{-1}(H)$ is unimportant.
  
   Thus by induction, we have that $F_1,\ldots,F_n$ gives a virtual
   shelling of $\Delta$. 
\end{proof}

Notice that Proposition~\ref{prop:shellingcheck} provides a sufficient condition for virtually shellable simplicial complexes. The converse is not true in general, and here we provide the following example of a virtually shellable simplicial complexes where Proposition~\ref{prop:shellingcheck} does not succeed.

\begin{example}\label{vshellablebutnotshellable}
  Consider on $X=\mathbb{P}^2\times \mathbb{P}^6$, with
  Cox ring $S=\mathbb{K}[x_0,x_1,x_2,y_0,\ldots,y_6]$, the simplicial complex given by
    \begin{equation*}
    \Delta = \left<
    \begin{split}
      &\{x_0,y_0,y_1\},\{x_0,y_1,y_2\},\{x_0,y_0,y_2\},
          \{x_0,x_1,y_0\},\{x_1,y_0,y_3\},
          \{x_1,x_2,y_3\},\\
          &\{x_2,y_3,y_4\},
          \{x_0,x_2,y_4\},
          \{x_0,y_4,y_5\},\{x_0,y_5,y_6\},\{x_0,y_4,y_6\}
    \end{split}
    \right>.
  \end{equation*}
    \begin{figure}[h]
      \[\begin{tikzpicture}

      \coordinate (x0) at (-4,0);
      \coordinate (x1) at (-1,0);
      \coordinate (x2) at (1,0);
      \coordinate (x00) at (4,0);

      \coordinate (y0) at (-2.8,1.3);
      \coordinate (y1) at (-2.8,-1.3);
      \coordinate (y2) at (-5.3,0);
      \coordinate (y3) at (0,1);
      \coordinate (y4) at (2.8,1.3);
      \coordinate (y5) at (2.8,-1.3);
      \coordinate (y6) at (5.3,0);
      
      \fill[gray,opacity=0.5] (x0) -- (y0) -- (y1) -- cycle;
      \fill[gray,opacity=0.5] (x0) -- (y0) -- (y2) -- cycle;
      \fill[gray,opacity=0.5] (x0) -- (y1) -- (y2) -- cycle;

      \fill[gray,opacity=0.5] (x0) -- (y0) -- (x1) -- cycle;
      \fill[gray,opacity=0.5] (y3) -- (y0) -- (x1) -- cycle;
      \fill[gray,opacity=0.5] (y3) -- (x2) -- (x1) -- cycle;
      \fill[gray,opacity=0.5] (y3) -- (x2) -- (y4) -- cycle;
      \fill[gray,opacity=0.5] (x00) -- (x2) -- (y4) -- cycle;

      \fill[gray,opacity=0.5] (x00) -- (y5) -- (y4) -- cycle;
      \fill[gray,opacity=0.5] (x00) -- (y5) -- (y6) -- cycle;
      \fill[gray,opacity=0.5] (x00) -- (y4) -- (y6) -- cycle;

      \draw (x0) -- (y0) -- (y1) -- cycle;
      \draw (x0) -- (y0) -- (y2) -- (x0);
      \draw (x0) -- (y1) -- (y2) -- (x0);

      \draw (y0) -- (y3);
      \draw (y3) -- (y4);

      \draw (x0) -- (x1) -- (x2) -- (x00);

      \draw (y3) -- (x1);
      \draw (y0) -- (x1);
      \draw (y3) -- (x2);

      \draw (x2) -- (y4);

      \draw (x00) -- (y4) -- (y6) -- (x00) -- (y5) -- (y4);
      \draw (y5) -- (y6);
      
      \fill[fill=white,draw=black] (x0) circle (.05) node[yshift=1ex,xshift=-1ex]{$x_0$};
      \fill[fill=white,draw=black] (x1) circle (.05) node[below]{$x_1$};
      \fill[fill=white,draw=black] (x2) circle (.05) node[below]{$x_2$};
      \fill[fill=white,draw=black] (x00) circle (.05) node[yshift=1ex,xshift=1ex]{$x_0'$};
      \fill[fill=white,draw=black] (y0) circle (.05) node[above]{$y_0$};
      \fill[fill=white,draw=black] (y1) circle (.05) node[right]{$y_1$};
      \fill[fill=white,draw=black] (y2) circle (.05) node[left]{$y_2$};
      \fill[fill=white,draw=black] (y3) circle (.05) node[above]{$y_3$};
      \fill[fill=white,draw=black] (y4) circle (.05) node[above]{$y_4$};
      \fill[fill=white,draw=black] (y5) circle (.05) node[left]{$y_5$};
      \fill[fill=white,draw=black] (y6) circle (.05) node[right]{$y_6$};
      

      \draw (0,-1) edge[->,"\tiny$x_0\sim x_0$'"] (0,-2);
      
      \node at (0,-3) {$\Delta$};
      \end{tikzpicture}\]
      \caption{The map from $\Delta'$ to $\Delta$ used in Example~\ref{vshellablebutnotshellable}}
      \label{fig:example-map}
      
    \end{figure}
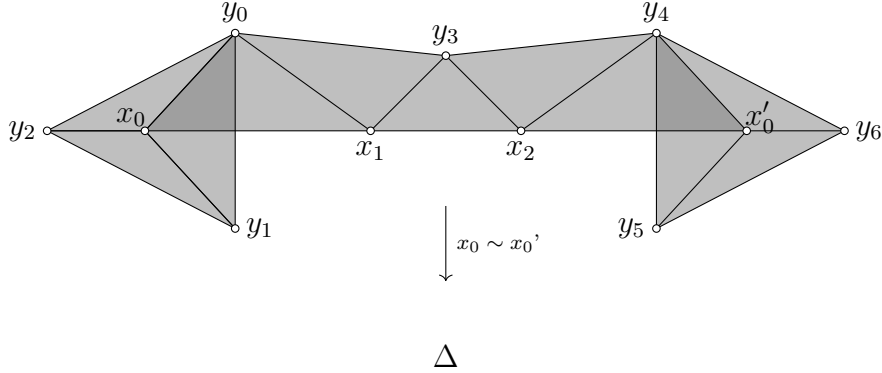

    First we can observe that this complex is not Cohen--Macaulay since the
    link
    \[\link_\Delta(\left\{x_0\right\}) = \left<\left\{y_0,y_1\right\},\left\{y_1,y_2\right\},\left\{y_0,y_2\right\},\left\{y_4,y_5\right\},\left\{y_5,y_6\right\},\left\{y_4,y_6\right\}\right>\]
    is not a connected simplicial complex, i.e.,
    $\tilde{H}_0(\link_\Delta(\{x_0\});\mathbb{K}) \neq 0$.
   
   To see that this complex is virtually shellable, consider
    the map of 2-dimensional simplicial complexes, given by mapping the
    vertex $x_0'$ to $x_0$ as illustrated in Figure~\ref{fig:example-map}. Note that this map gives $\Delta$ a virtual shelling, and so $\Delta$
  is virtually shellable.
  
  Although $\Delta$ is virtually shellable, it does not satisfy the
  preconditions of Proposition~\ref{prop:shellingcheck} since the only
  reasonable choice for $\mathcal{C}$ is $\langle\{x_0\}\rangle$, however,
  this choice does not satisfy condition 2 of
  Proposition~\ref{prop:shellingcheck} when attaching the
  simplex $\left\{x_0,y_0,y_2\right\}$.
\end{example}

It is well-known that if $\Delta$ is a shellable simplicial complex,
then so is ${\rm link}_\Delta(x)$ for all vertices $x$ of
$\Delta$ (e.g., see \cite[Proposition 10.14]{BW}).
To motivate that Definition~\ref{def:vshellable} is reasonable,
the next result shows that the links of virtually shellable simplicial
complexes have a similar property.  In particular,
if $\Delta$ is a virtually shellable simplicial
complex, the ${\rm link}_\Delta(x)$ is the union of virtually
shellable simplicial complexes, where the intersection
consists only of irrelevant facets.   We highlight
that in the next result,
$X$ is a product of projective spaces and not an arbitrary
smooth toric variety.

\begin{theorem}
  \label{thm:shellablelink}
  Suppose $\Delta$ is a virtually shellable simplicial complex on
  $X = \mathbb{P}^{n_1} \times \cdots \times \mathbb{P}^{n_r}$.  Let
  $x$ be a vertex of $\{x_{j,0},\ldots,x_{j,n_j}\}$ for some $j$.
  Then there exists $s \geq 1$ virtually shellable simplicial
  complexes $\Delta_1,\ldots,\Delta_s$
  on $\mathbb{P}^{n_1} \times \cdots \times \mathbb{P}^{n_j-1}
  \times \cdots \times \mathbb{P}^{n_r}$ such that
  $${\rm link}_\Delta(\{x\}) = \Delta_1 \cup \cdots \cup \Delta_s$$
  such that $\Delta_i \cap \Delta_j \subseteq \mathcal{B}_Y$ for
  all $i \neq j$.
\end{theorem}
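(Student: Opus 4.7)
The plan is to use the witnessing structure of virtual shellability directly, producing the decomposition one preimage of $x$ at a time. Let $\psi:\Delta'\to\Delta$ be the surjective simplicial map from Definition~\ref{def:vshellable}, and write $\psi^{-1}(x)=\{x^{(1)},\ldots,x^{(s)}\}$ with $s\geq 1$ (since $x$ appears in some facet of $\Delta$, each such facet lifts uniquely by condition (4), forcing a preimage of $x$). For each $i$, set $\Delta'_i:=\link_{\Delta'}(x^{(i)})$, let $\Delta_i:=\psi(\Delta'_i)$, and let $\psi_i:\Delta'_i\to\Delta_i$ be the induced restriction. I would then show that each $(\Delta_i,\psi_i)$ is a virtual shelling on $Y=\mathbb{P}^{n_1}\times\cdots\times\mathbb{P}^{n_j-1}\times\cdots\times\mathbb{P}^{n_r}$, and that these pieces assemble correctly.

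First I would check that $\Delta_i$ really is a simplicial complex on the Cox-ring variables of $Y$, i.e.\ that $x\notin V(\Delta_i)$. Since $\psi$ is dimension-preserving, two distinct preimages $x^{(i)},x^{(k)}$ cannot lie in a common face of $\Delta'$ (the edge $\{x^{(i)},x^{(k)}\}$ would collapse to the $0$-face $\{x\}$), so no $x^{(k)}$ with $k\neq i$ appears in $\Delta'_i$. Next I would establish the cover $\link_\Delta(\{x\})=\bigcup_i\Delta_i$. The inclusion $\supseteq$ is immediate, since $F\in\Delta_i$ lifts to some $F'\in\Delta'_i$ with $F'\cup\{x^{(i)}\}\in\Delta'$, whence $F\cup\{x\}\in\Delta$ and $x\notin F$. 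For $\subseteq$, note that $\Delta$ is pure (this follows from the purity of the shellable $\Delta'$ together with dimension preservation), so any facet $F$ of $\link_\Delta(\{x\})$ gives a facet $F\cup\{x\}$ of $\Delta$, which by Definition~\ref{def:vshellable}(4) has a unique lift, necessarily of the shape $F'\cup\{x^{(i)}\}$ for some $i$, placing $F\in\Delta_i$.

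Next I would verify the four conditions of Definition~\ref{def:vshellable} for each $(\Delta_i,\psi_i)$. Shellability of $\Delta'_i$ is the standard fact that restricting a shelling of a pure complex to the facets through a fixed vertex and deleting that vertex yields a shelling of the link; the induced order on facets of $\Delta_i$ is what we use as the virtual shelling order. Dimension preservation for $\psi_i$ is inherited. For condition (3), if $|\psi_i^{-1}(F)|>1$ then $|\psi^{-1}(F\cup\{x\})|>1$, so $F\cup\{x\}\in\mathcal{B}_X$; because $\mathcal{B}_X$ is generated by the blocks $\{x_{\ell,0},\ldots,x_{\ell,n_\ell}\}$ and $x$ lies in the $j$-th block, $F\cup\{x\}$ must sit inside $\{x_{j,0},\ldots,x_{j,n_j}\}$, so $F\subseteq\{x_{j,0},\ldots,x_{j,n_j}\}\setminus\{x\}$, i.e.\ $F\in\mathcal{B}_Y$. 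Condition (4) transfers directly: facets of $\Delta_i$ correspond via $F\mapsto F\cup\{x\}$ to facets of $\Delta$ containing $x$, whose unique lifts in $\Delta'$ must have the form $F'\cup\{x^{(i)}\}$.

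Finally, for the intersection property, if $F\in\Delta_i\cap\Delta_k$ with $i\neq k$, then lifts $F'_i\in\Delta'_i$ and $F'_k\in\Delta'_k$ give distinct faces $F'_i\cup\{x^{(i)}\}$ and $F'_k\cup\{x^{(k)}\}$ of $\Delta'$ both mapping to $F\cup\{x\}$, so $F\cup\{x\}\in\mathcal{B}_X$ and the same block argument places $F\in\mathcal{B}_Y$. The main obstacle I expect is precisely this block argument: the transfer $\mathcal{B}_X\Rightarrow\mathcal{B}_Y$ used in both condition (3) and the intersection step depends critically on $\mathcal{B}_X$ being generated by the coordinate blocks $\{x_{\ell,0},\ldots,x_{\ell,n_\ell}\}$, which is special to multi-projective space. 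This is what forces the hypothesis that $X$ is a product of projective spaces rather than a general smooth toric variety, and any extension would require understanding how the irrelevant complex restricts when passing to the link of a ray.
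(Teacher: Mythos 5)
Your proof is correct and follows the same decomposition strategy as the paper: for each preimage $x^{(i)}$ of $x$ under the witnessing map $\psi$, take $\Delta_i = \psi(\link_{\Delta'}(x^{(i)}))$, verify the four conditions of Definition~\ref{def:vshellable} for the restricted map, then establish the cover $\link_\Delta(\{x\})=\bigcup_i\Delta_i$ and the irrelevance of pairwise intersections. The small differences in execution (arguing the cover $\subseteq$ via facets and purity rather than the paper's direct face-by-face lifting, and passing the $\mathcal{B}_X\Rightarrow\mathcal{B}_Y$ step through $F\cup\{x\}\in\mathcal{B}_X$ instead of showing $F\in\mathcal{B}_X$ with $x\notin F$) are cosmetic.
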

 In what follows, if $\Gamma$ is a simplicial complex, $F$ a face of $\Gamma$
  and $x$ a vertex of $\Gamma$, then we abuse notation and write $F \setminus x$
  for $F \setminus \{x\}$.  We also write ${\rm link}_\Delta(x)$
  for ${\rm link}_\Delta(\{x\})$.

\begin{proof}
  Since $\Delta$ is a virtually shellable simplicial complex, there
  exists a shellable simplicial complex $\Delta'$ and simplicial
  map $\varphi:\Delta' \rightarrow \Delta$ that satisfies
  Definition \ref{def:vshellable}.
  Our proof uses the following strategy.  Given any vertex
  $v$ of $\Delta'$, consider ${\rm link}_{\Delta'}(v)$
  and set $\Delta_v = \varphi({\rm link}_{\Delta'}(v)) \subseteq \Delta$.
  Our first step is to
  show that $\Delta_v$ is a virtually shellable simplicial complex
  for all $v$ of $\Delta$.
  For the our second step, given any vertex $x\in \Delta$, we let
  $\{v_{i_1},\ldots,v_{i_s}\} = \varphi^{-1}(x)$.
  Since each $v_{i_j} \in \Delta'$, we will show that
    $\Delta_{v_{i_1}},\ldots,\Delta_{v_{i_s}}$ are
  the desired virtually shellable simplicial complexes.

  Fix a vertex $v \in \Delta'$. 
  Then
  \[{\rm link}_{\Delta'}(v) = \langle G\setminus v ~|~ \mbox{$G$ is a facet
    of $\Delta'$ and $v \in G$}  \rangle.\]
  It then follows that
  $$\Delta_v = \varphi({\rm link}_{\Delta'}(v)) = \langle
  \varphi(G\setminus v) ~|~ \mbox{$G$ is a facet
    of $\Delta'$ and $v \in G$} \} \rangle.$$
  We have $\varphi(v) \in \{x_{j,0},\ldots,x_{j,n_j}\}$ for some $j$.
  We now show the $\Delta_v$ is a virtually shellable
  simplicial complex of $Y = \mathbb{P}^{n_1} \times \cdots
  \times \mathbb{P}^{n_j-1} \times \cdots \times \mathbb{P}^{n_r}$.
  We let $\mathcal{B}_Y$ denote the simplicial complex
  associated to the irrelevant ideal $B_Y$.
 
  The simplicial complex ${\rm link}_{\Delta'}(v)$
  is shellable since $\Delta'$ is shellable by
  \cite[Proposition 10.14]{BW}.  We claim that this
  shellable complex along with the
  induced map of simplicial complexes
  $\varphi|_{{\rm link}_{\Delta'}(v)}:{\rm link}_{\Delta'}(v)
  \rightarrow \Delta_v$ satisfies all the conditions of
  Definition \ref{def:vshellable}.

  Condition (1) is immediate since $\link_{\Delta'}(v)$ is shellable
  and we can give $\Delta_v$ the induced shelling order.
      
  For condition (2), note the original map $\varphi$ preserves dimensions
  of simplices, so the restricted map $\varphi|_{{\rm link}_{\Delta'}(v)}$
  does as well.

  Condition (4) requires slightly more care, but if $G\in \Delta_v$ is a
  facet, then $G\cup \left\{x\right\}$ is a facet of $\Delta$.
  Thus there exists a unique facet in $\varphi^{-1}(G\cup \left\{x\right\})$,
  but this implies that $\varphi^{-1}(G)\in \link_\Delta'(v)$ is also unique,
  as desired.

  Finally, for condition (3), consider the sets
  $$\Gamma = \{F \in \Delta
  ~|~ | \varphi^{-1}(F)| > 1 \}~~\mbox{and}~~~
  \Gamma'= \{F \in \Delta_v
  ~|~ | (\varphi|_{{\rm link}_{\Delta'}(v)})^{-1}(F)| > 1 \}.$$
  Note that we have $\Gamma'\subset \Gamma$ since the restriction to
  ${\rm link}_{\Delta'}(v)$ can only reduce the size of the fibers. By assumption
  $\Gamma\subset\mathcal{B}_X$, and so $\Gamma'\subset\mathcal{B}_X$.
  Now since $X$ is a product of projective spaces,
  and $Y$ is the same product with the variable corresponding to the
  vertex $\varphi(v)$
  removed, we have $\mathcal{B}_Y\subset \mathcal{B}_X$. Moreover, if
  $F\in \mathcal{B}_X\setminus \mathcal{B}_Y$, then $\varphi(v)\in F$,
  but $\varphi(v)\notin F$ for $F\in \Gamma'$, so
  $\Gamma'\subset \mathcal{B}_Y$. Thus with all four conditions satisfied,
  $\Delta_v$ is virtually shellable.
      
  We now move to the second part of our proof.
  Suppose $\{v_1,\ldots,v_s\} = \varphi^{-1}(x)$.
  By our argument above, each simplicial complex $\Delta_{v_1},
  \ldots,\Delta_{v_s}$ is a virtually shellable
  simplicial complex of $Y$.  We now show that
  ${\rm link}_\Delta(x) = \Delta_{v_1} \cup \cdots \cup \Delta_{v_s}.$

  Since $\varphi$ is surjective, if $F\in {\rm link}_\Delta(x)$ then
  there is at least one element of $\varphi^{-1}(F\cup \{x\})$. So let
  $G\in \varphi^{-1}(F\cup \{x\})$. Then since $\varphi$ preserves dimension,
  there is a unique element of $\varphi^{-1}(x)\cap G$. So let this be $v$.
  Then notice that $\varphi(G\setminus v)=F$, so $F\in \Delta_v$ and
  so we have
  \[{\rm link}_\Delta(x) \subseteq \Delta_{v_1} \cup \cdots \cup \Delta_{v_s}.\]

  Let $v\in \varphi^{-1}(x)$, and let $F\in \Delta_v$ be arbitrary, and
  let $\hat{F}\in\link_{\Delta'}(v)$ with $\varphi(\hat{F})=F$.
  Then note that $\varphi(\hat{F}\cup \{v\}) = F\cup \{x\}$, and $x\notin F$
  since $\varphi$ preserves dimension. This gives us
  $F\in {\rm link}_\Delta(x)$. Thus
  \[{\rm link}_\Delta(x) \supseteq
  \Delta_{v_1} \cup \cdots \cup \Delta_{v_s}.\]

      Finally, consider $F\in \Delta_v\cap \Delta_w$ for
      $v,w\in \varphi^{-1}(x)$ with $v\neq w$. Then there is
      $\hat{F}\in \link_v(\Delta')$ and $\widetilde{F}\in \link_w(\Delta')$
      with $\varphi(\hat{F})=\varphi(\widetilde{F})=F$. Then note that
      since $w\neq v$, we have
      $\hat{F}\cup \{v\} \neq \widetilde{F}\cup \{w\}$,
      but $\varphi(\hat{F}\cup \{v\}) = \varphi(\widetilde{F}\cup \{w\}) =
      F\cup \{x\}$, so we have $F\cup \{x\}\in\mathcal{B}_X$ and thus
      $F\in\mathcal{B}_Y$.
\end{proof}

Note that in the above proof, the number of virtual simplicial complexes
needed to ``cover'' ${\rm link}_\Delta(\{x\})$ is given by the number
of elements of $\varphi^{-1}(x)$.  The next corollary is immediate
from this observation.

\begin{corollary}
   Suppose $\Delta$ is a virtually shellable simplicial complex on
   $X = \mathbb{P}^{n_1} \times \cdots \times \mathbb{P}^{n_r}$.
   Suppose that $\varphi:\Delta' \rightarrow \Delta$ is the simplicial
   map that shows  $\Delta$ is virtually shellable. If
   $x$ is a vertex of $\Delta$ such that $|\varphi^{-1}(x)|=1$,
   then $\link_\Delta(\{x\})$ is a virtually shellable simplicial complex.
\end{corollary}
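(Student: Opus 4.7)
The plan is to extract this corollary as an immediate specialization of the preceding theorem. Write $\varphi^{-1}(x) = \{v\}$. The theorem's decomposition $\link_\Delta(x) = \Delta_{v_1} \cup \cdots \cup \Delta_{v_s}$ then collapses to a single summand, $\link_\Delta(x) = \Delta_v$. In the first half of the preceding proof, an explicit virtual shelling of each $\Delta_v$ was constructed: the ambient shellable complex is $\link_{\Delta'}(v)$ (shellable by \cite[Proposition 10.14]{BW} since $\Delta'$ is), and the required simplicial map is $\varphi|_{\link_{\Delta'}(v)} : \link_{\Delta'}(v) \to \Delta_v$. All four conditions of Definition~\ref{def:vshellable} were verified there: the induced shelling order on the link (condition 1), dimension preservation (condition 2), the irrelevance of faces with nontrivial fibers relative to $\mathcal{B}_Y$ (condition 3), and the singleton-fiber condition on facets (condition 4). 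Substituting $\Delta_v = \link_\Delta(x)$ into the equality from the second half of the preceding proof then yields the conclusion directly.

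The main obstacle, if any, is essentially absent. The only subtle point in the preceding theorem was the potential need to express $\link_\Delta(x)$ as a union of virtually shellable pieces whose pairwise intersections lie in $\mathcal{B}_Y$ — a phenomenon that, in general, obstructs reading off a single virtual shelling of the whole link. The hypothesis $|\varphi^{-1}(x)| = 1$ is precisely what trivializes this concern: with only one piece in the union there is nothing to stitch together, so no additional combinatorial argument is required beyond quoting the first half of the preceding theorem's proof.
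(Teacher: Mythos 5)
Your proposal is correct and matches the paper's intended argument exactly: the paper states that the corollary ``is immediate'' from the observation that the number of pieces $\Delta_{v_i}$ in the decomposition of $\link_\Delta(\{x\})$ equals $|\varphi^{-1}(x)|$, so when that fiber is a singleton, the union collapses to the single virtually shellable complex $\Delta_v$ already constructed in the first half of the proof of Theorem~\ref{thm:shellablelink}.
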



\section{Virtually Cohen--Macaulay modules from Cohen--Macaulay covers}

We move to a proof of Theorem~\ref{maintheorem1} that underlies this paper. We
will proceed by first showing that maps of simplicial complexes with
finite fibers give finite maps of rings.

\begin{lemma} \label{lem:finite-fibers}
  Let $\psi:\Delta'\rightarrow \Delta$ be a map of simplicial complexes
  that preserves the dimensions of simplices.
  Then $\psi$ induces a finite map of rings
  $\varphi: \mathbb{K}[\Delta]\rightarrow \mathbb{K}[\Delta']$,
  that is, $\mathbb{K}[\Delta']$ is finite generated as a $\mathbb{K}[\Delta]$-module.
\end{lemma}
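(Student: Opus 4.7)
The plan is to write $\varphi$ down explicitly and then exploit dimension preservation to produce a monic integral equation for each vertex generator of $\mathbb{K}[\Delta']$ over $\mathbb{K}[\Delta]$.

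First I would define $\varphi$ at the level of polynomial rings by sending $x_v \mapsto \sum_{v' \in \psi^{-1}(v)} x_{v'}$ for each vertex $v$ of $\Delta$, and then verify that this descends to a map of Stanley--Reisner rings by checking $\varphi(I_\Delta) \subseteq I_{\Delta'}$. Expanding $\varphi(x_{v_1} \cdots x_{v_k})$ for a non-face $\{v_1,\ldots,v_k\}$ of $\Delta$ yields a sum of monomials $x_{v_1'} \cdots x_{v_k'}$ with $v_i' \in \psi^{-1}(v_i)$; since the $v_i$ are distinct so are the $v_i'$, and if $\{v_1', \ldots, v_k'\}$ were a face of $\Delta'$ then dimension preservation would force $\psi$ to be injective on it, so its image would be the non-face $\{v_1,\ldots,v_k\}$, a contradiction. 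Hence each such monomial lies in $I_{\Delta'}$ and $\varphi$ is well-defined.

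Next, to establish finiteness, I would argue that each vertex generator $x_{v'}$ of $\mathbb{K}[\Delta']$ is integral over $\mathbb{K}[\Delta]$. The key observation is that if $v_1', v_2' \in \psi^{-1}(v)$ are distinct vertices, then $\{v_1', v_2'\}$ cannot be a face of $\Delta'$: such a face would be one-dimensional while its image $\{v\}$ is zero-dimensional, contradicting dimension preservation. Thus $x_{v_1'} x_{v_2'} = 0$ in $\mathbb{K}[\Delta']$. Writing $\psi^{-1}(v) = \{v' = v_1', v_2', \ldots, v_m'\}$ and multiplying $\varphi(x_v) = x_{v_1'} + \cdots + x_{v_m'}$ by $x_{v'}$, the cross terms vanish and one obtains the monic equation
\[x_{v'}^2 - \varphi(x_v) \cdot x_{v'} = 0\]
in $\mathbb{K}[\Delta']$, exhibiting integrality. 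Since $\mathbb{K}[\Delta']$ is generated as a $\mathbb{K}[\Delta]$-algebra by the finite set $\{x_{v'} : v' \in V'\}$ and each generator satisfies a monic polynomial of degree two, the standard fact that an algebra generated by finitely many integral elements is module-finite concludes the proof.

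The only mildly delicate point is the well-definedness of $\varphi$ on the quotient rings; the integrality and finiteness portions then follow cleanly from the orthogonality relations $x_{v_1'} x_{v_2'} = 0$ among preimages of a common vertex. The main bookkeeping obstacle is just keeping straight the distinction between the simplicial map $\psi$ on vertex sets and the induced ring map $\varphi$ in the opposite direction on the polynomial algebras.
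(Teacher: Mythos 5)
Your argument is correct, and it takes a cleaner, more conceptual route than the paper's. The paper fixes the candidate generating set of square-free monomials of $\mathbb{K}[\Delta']$ together with $1$, and verifies by a three-case analysis on variables $x_{i,j}$ that this set does generate the module; your proof instead observes the orthogonality relations $x_{v_1'}x_{v_2'}=0$ for distinct preimages of a common vertex (both arguments hinge on the same key fact, that dimension preservation forces $\psi^{-1}(v)$ to be a set of pairwise non-adjacent vertices of $\Delta'$), packages it into the degree-two monic relation $x_{v'}^2 - \varphi(x_v)\,x_{v'}=0$, and invokes the standard theorem that a ring extension which is finitely generated as an algebra by integral elements is module-finite. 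What your approach buys is brevity and a black-boxed conclusion, at the cost of citing the general integrality machinery; what the paper's approach buys is an explicit finite generating set for $\mathbb{K}[\Delta']$ over $\mathbb{K}[\Delta]$, which is more concrete though the bookkeeping is heavier. You also verify well-definedness of $\varphi$ on the quotients (that $\varphi(I_\Delta)\subseteq I_{\Delta'}$) more carefully than the paper does in this lemma --- the paper defers that check, only asserting it later in the proof of the main theorem --- and your verification is a genuine improvement, since it is needed and correctly uses the same dimension-preservation hypothesis.
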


\begin{proof}
  Let the variables in $\Delta$ be $x_{1},\ldots,x_{n}$, and let the variables in $\Delta'$
  be given by $x_{i,0},\ldots,x_{i,m_i}$, such that the map of simplicial complexes
  $\psi:\Delta'\rightarrow \Delta$ maps the vertex corresponding to $x_{i,j}$ to the vertex $x_{i}$.

  Then consider the corresponding map of rings $\varphi:\mathbb{K}[\Delta]\rightarrow \mathbb{K}[\Delta']$
  given by $\varphi(x_i)=\sum_{j=0}^{m_i} x_{i,j}$. This map of rings
  gives $\mathbb{K}[\Delta']$ a $\mathbb{K}[\Delta]$-module structure
  where the scalar operation is given by $g\cdot f = \varphi(g)f$.
  
  Now consider $M=\mathbb{K}[\Delta']$, where we view $M$ as a $\mathbb{K}[\Delta]$-module.
  We must prove that $M$ is a finitely generated $\mathbb{K}[\Delta]$-module.
  In particular, we will prove that the square-free monomials corresponding to
  simplices of $\Delta'$ along with $1$ form a generating set as a $\mathbb{K}[\Delta]$-module.
    
  To see this, let $M'\subseteq M$ be the $\mathbb{K}[\Delta]$-module generated by $1\in \mathbb{K}[\Delta']=M$ and the square-free
  monomials corresponding to simplices of $\Delta'$.
  Then we wish to show that $M'=M$. Since $M'\subseteq M$ by construction,
  it suffices to show that $M\subseteq M'$.
  
  Note that $M'$ contains $1$ so $\mathbb{K}[\Delta']M'=M$.
  Then since the variables $x_{i,j}$ generate $\mathbb{K}[\Delta']$ as an algebra, to show that $M\subseteq M'$, it suffices to show that $x_{i,j}M'\subset M'$
  for all $1\leq i\leq n$ and $0\leq j\leq m_i$.
  For this, let $m\in M'$ be a generator as described above and fix $x_{i,j}\in \mathbb{K}[\Delta']$, then it suffices
  to show that $x_{i,j}m\in M'$. Then we have 3 cases, outlined below:

  \noindent\emph{Case 1:} $x_{i,s}\mid m$ for some $s\neq j$

  This case is clear, since there are no simplices in $\Delta'$ containing both $x_{i,j}$ and $x_{i,s}$, so $x_{i,j}m=0$.
  
  \noindent\emph{Case 2:} $x_{i,j}\mid m$

  Consider $\varphi(x_{i})\cdot m=\sum_{s=0}^{m_j} x_{i,s}m=x_{i,j}m$. Moreover,
  we have $\varphi(x_{i})\cdot m\in M'$, so this case is done.

  \noindent \emph{Case 3:} $x_{i,s}\nmid m$ for all $0\leq s\leq m_i$

  Then if $\supp(x_{i,s}m)$ is an element of $\Delta'$, then $x_{i,s}m\in M'$ by construction,
  otherwise $x_{i,s}m=0$
\end{proof}

Now with this lemma, we can move on to the proof of the main theorem.

\begin{proof}[Proof of Theorem~\ref{maintheorem1}]
  Let $S'$ be the polynomial ring for $\Delta'$,
  let $\varphi : S\rightarrow S'$ be the map given by $\varphi(x_i)=\sum x_{i,j}$,
  and let $\pi:S\rightarrow \mathbb{K}[\Delta]$ and $\pi':S'\rightarrow \mathbb{K}[\Delta']$
  be the quotient maps by $I_{\Delta}$ and $I_{\Delta'}$ respectively.
  Finally denote by $\bar{\varphi} : \mathbb{K}[\Delta] \rightarrow \mathbb{K}[\Delta']$
  the equivalent map between the Stanley-Reisner rings.
  Importantly, $\varphi(I_{\Delta})\subset I_{\Delta'}$,
  and so this map is well defined.

  From this, we get an $S$-module structure on $\mathbb{K}[\Delta']$, as in the
  previous lemma. From now on, we let $M:=\mathbb{K}[\Delta']$ be viewed as
  an $S$-module. By Lemma~\ref{lem:finite-fibers}, $M$ is finitely generated as an
  $S$-module.  Moreover $M$ is a Cohen--Macaulay $S$-module
  since $\mathbb{K}[\Delta']$ is a Cohen-Macaulay ring by our hypothesis, 
  and the map $S\rightarrow \mathbb{K}[\Delta']$ inducing the
  $S$-module structure is finite.

  Now we must prove that
  $M^{\sim}\cong\left(S/I_{\Delta}\right)^{\sim}$.
  For this consider the map of $S$-modules
  $\hat{\varphi}:S/I_{\Delta}\rightarrow M$
  given by viewing the corresponding map of rings
  $\bar{\varphi}$ as an $S$-module map.

  Now we will show that the sheafification of this map,
  denoted $\hat{\varphi}^{\sim}$, is an isomorphism.
  Since $\bar{\varphi}$ is injective, so is $\hat{\varphi}$, and thus
  it suffices to show that $B_X\cdot (\coker \hat{\varphi}) = 0$ \cite[Proposition 5.3.10]{CLS}.
  Rephrased in terms of $\mathbb{K}[\Delta']$, it suffices to
  show that $\varphi(B_X)\cdot \mathbb{K}[\Delta']
  \subset \bar{\varphi}(\mathbb{K}[\Delta])$,
  or equivalently $\pi'(\varphi(B_X))\cdot \mathbb{K}[\Delta']\subset
  \bar{\varphi}(\mathbb{K}[\Delta])$.
  Then finally, we have $\pi'\circ \varphi = \bar{\varphi}\circ \pi$, so we want
  \[\bar{\varphi}(\pi(B_X))\cdot \mathbb{K}[\Delta']\subset \bar{\varphi}(\mathbb{K}[\Delta]).\]

  To simplify our work, we focus on the monomials that appear in
   $\bar{\varphi}(\mathbb{K}[\Delta])$. 
  For a monomial $m \in \mathbb{K}[\Delta]$,
  the image of $\bar{\varphi}(m)$ is a monomial if and only if
  $\left|\psi^{-1}(\supp(m))\right|=1$. In addition, since $\psi$ gives a bijection
  on relevant simplices of $\Delta$, if $F \in \Delta'$ is such that
  $\psi(F)\in\Delta$ is relevant, then for every monomial
  $m'\in \mathbb{K}[\Delta']$ with $\supp(m')=F$, we have $m'\in \bar{\varphi}(\mathbb{K}[\Delta])$.

  So let $b\in B_X$ be a monomial generator and let $m\in \mathbb{K}[\Delta']$ be a monomial.
  Then it suffices to show $\bar{\varphi}(\pi(b))\cdot m \in \bar{\varphi}(\mathbb{K}[\Delta])$.
  Notice that $\supp(b)$ must be a relevant simplex in $\Delta$, and so
  $\bar{\varphi}(\pi(b))$ is a monomial or zero, and thus so is $\bar{\varphi}(\pi(b))\cdot m$.

  In the case where
  $\bar{\varphi}(\pi(b))\neq 0$,
  we have $\supp(b)\subset \psi(\supp(\bar{\varphi}(\pi(b))\cdot m)$,
  and thus $\psi(\supp(\bar{\varphi}(\pi(b))\cdot m))$ is relevant. Finally, for a monomial in $f\in \mathbb{K}[\Delta]$ with $\supp(f)$ a relevant simplex in $\Delta$, $\bar{\varphi}(f)$ is a monomial. As a consequence, have $\bar{\varphi}(\pi(b))\cdot m\in \bar{\varphi}(\mathbb{K}[\Delta])$, as desired.

  Thus we have that $M^{\sim}=(S/I_{\Delta})^{\sim}$ for a Cohen--Macaulay module $M$.
\end{proof}


\section{Final comments}

The following example shows
that the techniques of this paper
result in new classes of simplicial
complexes which can now be shown to be virtually Cohen--Macaulay.

\begin{example}

  Let $X=\mathbb{P}^1\times \mathbb{P}^4$ with Cox ring
  $S=\mathbb{K}[x_0,x_1,y_0,\ldots,y_3]$. Then consider the
  simplicial complex
  \[\Delta = \langle
  \{x_0,x_1,y_0,y_1\}, \{x_0,y_0,y_1,y_2\}, \{x_0,y_1,y_2,y_3\},
  \{x_0,x_1,y_2,y_3\} \rangle.\]
  Then for the complex
  \[\Delta' = \langle \{x_0,x_1,y_0,y_1\},
  \{x_0,y_0,y_1,y_2\}, \{x_0,y_1,y_2,y_3\},
  \{x_0,x_1',y_2,y_3\} \rangle\]
  we have a map of vertices $\varphi:\Delta'\rightarrow \Delta$ given by $\varphi(x_i)=x_i,\varphi(y_i)=y_i,\varphi(x_1')=x_1$.

  It is not hard to see that $\Delta'$ is shellable (using the order
  of the facets given) and thus Cohen--Macaulay. Then since
  $\varphi$ satisfies the conditions of Theorem~\ref{maintheorem1},
  we have that $\Delta$ is virtually Cohen--Macaulay in $X$.
  
  Moreover, $\Delta$ is not Cohen--Macaulay, since
  $\link_{\Delta}(\{x_0,x_1\}) = \langle \{y_0,y_1\},\{y_2,y_3\} \rangle$
  is not a connected simplicial complex.
  Moreover, there is no way to fix this failure
  by only adding components of the irrelevant as was the technique in \cite{BKLY},
  since to make $\link_{\Delta}(\{x_0,x_1\})$, connected, any new simplices must contain
  both $x_0$,$x_1$, and some vertex in $y_i$, which would mean it would
  correspond to a relevant component.
\end{example}

Additionally, the results of this paper cover nearly all cases that
\cite{BKLY} covers.  The only exception being those cases where
some homology interferes with our ability to construct shellings.
This is described in detail in the following corollary of
  Proposition \ref{prop:shellingcheck}.

\begin{cor}
  If $\Delta$ is a relevant-connected pure $r$-dimensional simplicial
  complex on $X=\mathbb{P}^{n_1}\times \cdots \times \mathbb{P}^{n_r}$
  with only relevant facets and $H_{r-1}(\Delta,\mathcal{B};\mathbb{Z})=0$,
  then $\Delta$ is virtually shellable.
\end{cor}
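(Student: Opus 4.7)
The plan is to apply Proposition~\ref{prop:shellingcheck} with the choice $\mathcal{C} = \Delta \cap \mathcal{B}_X$, the subcomplex consisting of all irrelevant faces of $\Delta$. Since every facet of $\Delta$ is relevant by hypothesis, the precondition $F_i \notin \mathcal{C}$ is automatic, and the task reduces to producing an ordering $F_1 \prec \cdots \prec F_n$ of the facets of $\Delta$ such that for each $i$ the set
\[
\Xi_i \;=\; \bigl(\langle F_1,\ldots,F_i\rangle \cap \langle F_{i+1}\rangle\bigr) \setminus \mathcal{C}
\]
is pure of dimension $r-1$ and its top-dimensional faces are closed under intersection.

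The construction of the order will proceed by induction on the number of facets. Relevant-connectedness says that the ``relevant facet graph'' $\mathcal{G}$ (with vertex set the facets of $\Delta$ and edges recording pairs of facets sharing a relevant $(r-1)$-face) is connected, so traversing any spanning tree of $\mathcal{G}$ supplies an order in which each $F_{i+1}$ shares at least one relevant $(r-1)$-face with some earlier $F_j$; this secures condition~(1) of Proposition~\ref{prop:shellingcheck}. To obtain condition~(2), I would refine the selection so that at each stage $F_{i+1}$ attaches to $\langle F_1,\ldots,F_i\rangle$ along \emph{exactly one} relevant $(r-1)$-face. Under this stronger attachment, the top-dimensional stratum of $\Xi_i$ is a single simplex, and the intersection-closure condition holds vacuously.

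The homological hypothesis $H_{r-1}(\Delta,\mathcal{B};\mathbb{Z})=0$ is precisely what delivers such an ``exactly one'' attachment. Translated to the relative chain complex generated by the relevant faces of $\Delta$, the vanishing says that the boundary map $\partial \colon C_r \to C_{r-1}$ surjects onto $\ker \partial_{r-1}$, so there are no nontrivial relative $(r-1)$-cycles. If at some inductive step every remaining facet were forced to meet the previously placed ones along two or more relevant $(r-1)$-faces, the ``extra'' shared faces would combine, with signs supplied by the boundaries of the placed facets, into a relative $(r-1)$-cycle not bounded by any relevant $r$-chain, contradicting the hypothesis. I expect to package this into a removal lemma: under the hypotheses of the corollary, there is always a facet $F$ whose deletion preserves pureness, relevant-connectedness, the relevance of all remaining facets, and the vanishing of relative $H_{r-1}$, so the induction applies to $\Delta \setminus \langle F\rangle$ and then $F$ can be added last.

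The main obstacle is proving this removal lemma, since one must simultaneously control a combinatorial (``non-cut-facet'') property of $\mathcal{G}$ and the vanishing of a relative homology group under deletion. My plan is to attack this via the long exact sequence of the triple $(\Delta,\, \Delta \setminus \langle F\rangle,\, \mathcal{B})$ together with a chain-level examination of $\partial_r$: the surjectivity of $\partial_r$ onto $\ker \partial_{r-1}$ should let us identify a facet $F$ for which both the connectedness of $\mathcal{G}$ and the vanishing of $H_{r-1}$ persist after removal. A secondary subtlety is ensuring that the ``exactly one'' attachment for $F$ is inherited from the hypothesis rather than having to be arranged by hand; this should follow from the observation that a facet attaching along two relevant $(r-1)$-faces could always be used to shorten $\partial_r$-preimages of cycles, creating a class obstructing the homological vanishing.
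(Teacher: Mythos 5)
Your plan is structurally aligned with the paper's: both invoke Proposition~\ref{prop:shellingcheck} (your choice $\mathcal{C}=\Delta\cap\mathcal{B}_X$ and the paper's $\mathcal{C}=\mathcal{B}_X$ give the same $\Xi_i$, since $\Xi_i$ lives inside $\Delta$ anyway), and both recognize that the work reduces to producing an ordering in which each new facet $F_{i+1}$ meets the previously placed facets along \emph{exactly one} relevant $(r-1)$-face, after which condition~(2) of the proposition is vacuous. You are also right that on a product of projective spaces every relevant proper face of a facet has dimension exactly $r-1$, so condition~(1) is automatic once there is at least one relevant face in the intersection.

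The gap is in how you propose to actually produce the ``exactly one'' ordering. You leave this as a to-be-proven removal lemma, sketched via the long exact sequence of the triple $(\Delta,\Delta\setminus\langle F\rangle,\mathcal{B})$ and a chain-level argument, and you flag it yourself as the main obstacle. The paper sidesteps all of that: it invokes \cite[Lemma 3.6]{BKLY} to conclude that, under the hypotheses (relevant-connected, only relevant facets, $H_{r-1}(\Delta,\mathcal{B};\mathbb{Z})=0$), the relevant facet graph is already a \emph{tree}, not merely connected. Any root-to-leaf traversal of that tree then gives the ordering with no inductive deletion argument: each newly placed facet attaches to the part already placed along exactly one graph edge, hence along exactly one relevant $(r-1)$-face. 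Your heuristic about ``extra'' shared faces combining into a relative cycle is the right intuition (it is essentially why the facet graph has trivial first Betti number), but as written it is not a proof: you have not ruled out that such a cycle could instead be killed by a chain involving facets not yet placed, and your removal lemma must preserve pureness, relevant-connectedness, relevance of all remaining facets, \emph{and} the vanishing of $H_{r-1}$ all at once, which you acknowledge but do not establish. Replacing the speculative removal induction with the direct appeal to the tree structure from \cite{BKLY} is what closes the gap.
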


Note that everything except the homology constraint replicates the constraints of
\cite[Theorem 3.1]{BKLY}, and so this corollary covers a subset of that theorem.

\begin{proof}
  By \cite[Lemma 3.6]{BKLY}, we can construct a graph $G$
  with vertices given by the
  $r-1$-dimensional relevant simplices and the edges given by the $r$-dimensional
  relevant simplices. Then since $H_{r-1}(\Delta,\mathcal{B};\mathbb{Z})=0$, the graph
  given by the facets of $\Delta$ with edges given by relevant intersections is a tree.

  Using this tree, we can choose an ordering of the facets of
  $\Delta=\left<F_1,\ldots,F_n\right>$ such that
  $\left<F_1,\ldots,F_{i}\right>\cap \langle F_{i+1} \rangle$ has only one relevant simplex.
  Then we can apply Proposition~\ref{prop:shellingcheck} with
  $\mathcal{C}=\mathcal{B}_X$.

  Condition (1) is immediate since the only relevant simplices in
  the boundary of a facet $F\in \Delta$ must be of dimension
  $\dim \Delta - 1 = \dim F - 1$.

  Condition (2) is immediate from the fact that
  $\langle F_{i+1}\rangle \cap \left<F_1,\ldots,F_{i}\right>$ only contains
  one relevant simplex.
\end{proof}

\begin{bibdiv}
\begin{biblist}

  \bib{Ayah}{article}{
      author={Almousa, Ayah},
      author={Bruce, Juliette},
      author={Loper, Michael},
      author={Sayrafi, Mahrud},
      title={The virtual resolutions package for Macaulay2},
      journal={J. Softw. Algebra Geom.},
      volume={10},
      date={2020},
      number={1},
      pages={51--60},
      review={\MR{4202502}},
      doi={10.2140/jsag.2020.10.51},
    }

  \bib{Virtual}{article}{
      author={Berkesch, Christine},
      author={Erman, Daniel},
      author={Smith, Gregory G.},
      title={Virtual resolutions for a product of projective spaces},
      journal={Algebr. Geom.},
      volume={7},
      date={2020},
      number={4},
      pages={460--481},
      issn={2313-1691},
      review={\MR{4156411}},
      doi={10.14231/ag-2020-013},
    }

  \bib{BKLY}{article}{
      author={Berkesch, Christine},
      author={Klein, Patricia},
      author={Loper, Michael C.},
      author={Yang, Jay},
      title={Homological and combinatorial aspects of virtually Cohen--Macaulay
        sheaves},
      journal={Trans. London Math. Soc.},
      volume={8},
      date={2021},
      number={1},
      pages={413--434},
      review={\MR{4451554}},
      doi={10.1112/tlm3.12036},
    }

  \bib{BW}{article}{
      author={Bj\"{o}rner, Anders},
      author={Wachs, Michelle L.},
      title={Shellable nonpure complexes and posets. II},
      journal={Trans. Amer. Math. Soc.},
      volume={349},
      date={1997},
      number={10},
      pages={3945--3975},
      issn={0002-9947},
      review={\MR{1401765}},
      doi={10.1090/S0002-9947-97-01838-2},
    }

   \bib{Booms1}{article}{
      title={Hilbert-Burch virtual resolutions for points in $\mathbb{P}^1 \times \mathbb{P}^1$}, 
      author={Booms-Peot, Caitlyn},
      year={2023},
      eprint={arXiv:2304.04953},
      archivePrefix={arXiv},
    }
  \bib{Booms}{article}{
      author={Booms-Peot, Caitlyn},
      author={Cobb, John},
      title={Virtual criterion for generalized Eagon-Northcott complexes},
      journal={J. Pure Appl. Algebra},
      volume={226},
      date={2022},
      number={12},
      pages={Paper No. 107138, 8},
      issn={0022-4049},
      review={\MR{4425235}},
      doi={10.1016/j.jpaa.2022.107138},
    }

  \bib{BE}{article}{
      author={Brown, Michael K.},
      author={Erman, Daniel},
      title={Results on virtual resolutions for toric varieties},
      date={2023},
      eprint={arXiv:2303.14319}
    }

  \bib{BCHS}{article}{
      author={Bruce, Juliette},
      author={Craton Heller, Lauren},
      author={Sayrafi, Mahrud},
      title={Characterizing Multigraded Regularity on Products of Projective Spaces},
      date={2021},
      eprint={arXiv:2110.10705}
    }

  \bib{CLS}{book}{
      author={Cox, David A.},
      author={Little, John B.},
      author={Schenck, Henry K.},
      title={Toric varieties},
      series={Graduate Studies in Mathematics},
      volume={124},
      publisher={American Mathematical Society, Providence, RI},
      date={2011},
      pages={xxiv+841},
      isbn={978-0-8218-4819-7},
      review={\MR{2810322}},
      doi={10.1090/gsm/124},
    }

  \bib{FH}{article}{
      title={Rouquier dimension is Krull dimension for normal toric varieties}, 
      author={Favero, David},
      author={Huang, Jesse},
      year={2023},
      eprint={arXiv:2302.09158},
      archivePrefix={arXiv}
    }

  \bib{Loper1}{article}{
      author={Gao, Jiyang},
      author={Li, Yutong},
      author={Loper, Michael C.},
      author={Mattoo, Amal},
      title={Virtual complete intersections in $\mathbb{P}^1 \times \mathbb{P}^1$},
      journal={J. Pure Appl. Algebra},
      volume={225},
      date={2021},
      number={1},
      pages={Paper No. 106473, 15},
      issn={0022-4049},
      review={\MR{4122076}},
      doi={10.1016/j.jpaa.2020.106473},
    }

  \bib{M2}{misc}{
      author={Grayson, Daniel R.},
      author={Stillman, Michael E.},
      title={Macaulay2, a software system for research in algebraic geometry},
      note={{\tt http://www.math.uiuc.edu/Macaulay2/}}
    }

  \bib{HHL}{article}{
      title={Resolutions of toric subvarieties by line bundles and applications}, 
      author={Hanlon, Andrew},
      author={Hicks, Jeff},
      author={Lazarev, Oleg},
      year={2023},
      eprint={arXiv:2303.03763},
      archivePrefix={arXiv},
    }

  \bib{HNVT}{article}{
      author={Harada, Megumi},
      author={Nowroozi, Maryam},
      author={Van Tuyl, Adam},
      title={Virtual resolutions of points in $\mathbb{P}^1\times\mathbb{P}^1$},
      journal={J. Pure Appl. Algebra},
      volume={226},
      date={2022},
      number={12},
      pages={Paper No. 107140, 18},
      issn={0022-4049},
      review={\MR{4438917}},
      doi={10.1016/j.jpaa.2022.107140},
    }

  \bib{HH}{book}{
      author={Herzog, J\"{u}rgen},
      author={Hibi, Takayuki},
      title={Monomial ideals},
      series={Graduate Texts in Mathematics},
      volume={260},
      publisher={Springer-Verlag London, Ltd., London},
      date={2011},
      pages={xvi+305},
      isbn={978-0-85729-105-9},
      review={\MR{2724673}},
      doi={10.1007/978-0-85729-106-6},
    }

  \bib{KLMXY}{article}{
      author={Kenshur, Nathan},
      author={Lin, Feiyang},
      author={McNally, Sean},
      author={Xu, Zixuan},
      author={Yu, Teresa},
      title={On virtually Cohen--Macaulay simplicial complexes},
      journal={J. Algebra},
      volume={631},
      date={2023},
      pages={120--135},
      issn={0021-8693},
      review={\MR{4592317}},
      doi={10.1016/j.jalgebra.2023.04.017},
    }

  \bib{Loper2}{article}{
      author={Loper, Michael C.},
      title={What makes a complex a virtual resolution?},
      journal={Trans. Amer. Math. Soc. Ser. B},
      volume={8},
      date={2021},
      pages={885--898},
      review={\MR{4325863}},
      doi={10.1090/btran/91},
    }



  \bib{S75}{article}{
    author={Stanley, Richard P.},
   title={The upper bound conjecture and Cohen--Macaulay rings},
   journal={Studies in Appl. Math.},
   volume={54},
   date={1975},
   number={2},
   pages={135--142},
   issn={0022-2526},
   review={\MR{0458437}},
   doi={10.1002/sapm1975542135},
  }
  
    \bib{S}{book}{
      author={Stanley, Richard P.},
      title={Combinatorics and commutative algebra},
      series={Progress in Mathematics},
      volume={41},
      edition={2},
      publisher={Birkh\"{a}user Boston, Inc., Boston, MA},
      date={1996},
      pages={x+164},
      isbn={0-8176-3836-9},
      review={\MR{1453579}},
    }

    \bib{V}{book}{
      author={Villarreal, Rafael H.},
      title={Monomial algebras},
      series={Monographs and Research Notes in Mathematics},
      edition={2},
      publisher={CRC Press, Boca Raton, FL},
      date={2015},
      pages={xviii+686},
      isbn={978-1-4822-3469-5},
      review={\MR{3362802}},
    }

    \bib{Yang}{article}{
      author={Yang, Jay},
      title={Virtual resolutions of monomial ideals on toric varieties},
      journal={Proc. Amer. Math. Soc. Ser. B},
      volume={8},
      date={2021},
      pages={100--111},
      review={\MR{4215648}},
      doi={10.1090/bproc/72},
    }
    
\end{biblist}
\end{bibdiv}

\end{document}